\numberwithin{equation}{section} \numberwithin{figure}{section}
\newtheorem{thm}{Theorem}[section]
\newtheorem{theorem}{Theorem}[section]
\newtheorem{lm}[thm]{Lemma}
\newtheorem{lemma}[thm]{Lemma}
\theoremstyle{definition}
\newtheorem{notation}[thm]{Notation}
\newtheorem{pr}[thm]{Proposition}
\theoremstyle{definition}
\newtheorem{df}[thm]{Definition}
\theoremstyle{definition}
\newtheorem{definition}[thm]{Definition}
\theoremstyle{definition}
\newtheorem{rem}[thm]{Remark}
\newtheorem{remark}[thm]{Remark}
\newcommand{\define}[1]{{\em #1\/}}
\newcommand{\cF}{{\mathcal F}}
\newcommand{\fg}{{\mathfrak g}}
\newcommand{\cH}{{\mathcal H}}
\newcommand{\cL}{{\mathcal L}}
\newcommand{\fv}{{\mathfrak v}}
\newcommand{\field}[1]{\mathbb{#1}}
\newcommand{\Sph}{\field{S}}   
\newcommand{\eps}{\epsilon}
\newcommand{\vect}{{\vec{v}}}
\newcommand{\V}{\field{V}} 
\newcommand{\W}{\field{W}}
\newcommand{\Rn}{\mathbb{R}^{n}}
\newcommand{\hn}{\mathbb{R}^{n-1}}
\newcommand{\R}{\mathbb{R}}
\newcommand{\N}{\mathbb{N}}
\renewcommand{\hn}{\mathbb{G}}
\newcommand{\GG}{\mathbb{G}}
\newcommand{\ha}{\mathcal{H}}
\newcommand{\h}{\mathbb{G}}
\renewcommand{\ss}{\mathcal{S}}
\newcommand{\s}{\sigma}
\newcommand{\stm}{\setminus}
\newcommand{\anah}{\nabla_\GG}
\newcommand{\divh}{\di_\GG}
\newcommand{\g}{\gamma}
\newcommand{\G}{\Gamma}
\renewcommand{\dh}{\mathcal{L}}
\newcommand{\restrict}{\begin{picture}(12,12)
                        \put(2,0){\line(1,0){8}}
                        \put(2,0){\line(0,1){8}}
                       \end{picture}}
\newcommand{\cS}{{\mathcal S}}
\DeclareMathOperator{\rank}{rank}
\DeclareMathOperator{\diver}{div}
\newcommand {\grtrsim} {\ {\raise-.5ex\hbox{$\buildrel>\over\sim$}}\ }
\newcommand{\e}{\varepsilon}
\newcommand{\ra}{\rightarrow}
\newcommand{\khii}{\text{\lower -.4ex\hbox{$\chi$}}}
\DeclareMathOperator{\spt}{spt} \DeclareMathOperator{\dist}{dist}
 \DeclareMathOperator{\diam}{diam}
 \DeclareMathOperator{\di}{div}
\DeclareMathOperator{\spa}{span}
\newcommand{\der}{\partial}
\renewcommand{\div}{\mbox{\rm div}}
\newcommand{\lan}{\langle}
\newcommand{\ran}{\rangle}
\renewcommand{\O}{\Omega}
\begin{document}
\title{Removable sets for Lipschitz harmonic functions on Carnot groups}
\author{Vasilis Chousionis}
\address{Department of Mathematics \\ University of Illinois \\ 1409
  West Green St. \\ Urbana, IL, 61801}
\email{vchous@illinois.edu}
\author{Valentino Magnani}
\address{Dipartimento di Matematica, Universit\`a di Pisa, Largo Bruno Pontecorvo 5, 56127, Pisa, Italy}
\email{magnani@dm.unipi.it}

\author{Jeremy T. Tyson}
\address{Department of Mathematics \\ University of Illinois \\ 1409
  West Green St. \\ Urbana, IL, 61801}
\email{tyson@illinois.edu}
\date{\today}
\thanks{VM supported by the European research project AdG ERC
  ``GeMeThNES", grant agreement 246923. JTT supported by NSF grant
  DMS-1201875.}

\begin{abstract}
Let $\GG$ be a Carnot group with homogeneous dimension $Q \geq 3$ and
let $\cL$ be a sub-Laplacian on $\GG$. We prove that the critical
dimension for removable sets of Lipschitz $\cL$-harmonic functions is
$(Q-1)$. Moreover we construct self-similar sets with positive and finite
$\ha^{Q-1}$ measure which are removable.

\end{abstract}

\maketitle
\section{Introduction}
A compact set $K$ in the complex plane is called \textit{removable}
for bounded analytic functions if for any open set $\O$ containing $K$
any bounded analytic function on $\O \stm K$ has an analytic extension
to $\O$. It is easily seen that points are removable while closed
disks are not. Already at the end of the 19th century, Painlev\'e
proved that sets of zero length are removable. He naturally raised
the question of geometrically characterizing removable sets. In 1947
Ahlfors in \cite{Ahl} gave a potential-theoretic characterization of
removable sets by defining the celebrated notion of analytic capacity.
In passing we note that Vitushkin, see e.g.\ \cite{Vi}, used analytic
capacity and a close variant, the so called continuous analytic
capacity, to study problems of uniform rational
approximation on compact sets of the complex plane. Although it
was known by then that the critical dimension for removable sets is
$1$ very few things were known about sets with critical dimension.
The following question arose: {\it is it true that a compact $K$ is
non-removable if and only $\mathcal{H}^1(K)>0$?} Here, $\ha^1$ stands
for the $1$-dimensional Hausdorff measure.  

The negative answer to the above question was obtained by Vitushkin
\cite{Vi2} in the 1960's. Vitushkin constructed a removable compact
set $K$ with $0<\ha^1(K)<\infty$. Later on, Garnett \cite{gar} and
Ivanov \cite{iv} proved that the familiar $1$-dimensional $4$-corners
Cantor set is in fact removable for bounded analytic functions. The
``irregular" geometric structure of these examples led Vitushkin to
conjecture that: \textit{a compact set $K$ is removable if and only if
it is purely unrectifiable}. Recall that a set  $K$ is called
rectifiable if there exist countably many Lipschitz curves $\G_i$
such that $\ha^1(K \stm \cup_i \Gamma_i)=0$.
On the other hand a set is called purely unrectifiable if it
intersects any rectifiable curve in a set of $\ha^1$ measure zero. 
Although Vitushkin's conjecture is false in full generality (this was
proved in an astonishing way by Mattila in \cite{matannals}) it turns
out that it holds if we restrict attention to sets of finite length.
The latter result is due to David \cite{Dav}.  

The proof of Vitushkin's conjecture has a long and interesting history
which is deeply related to the geometric study of singular integrals.
See \cite{Ve}, \cite{M} or \cite{tolsabook} for extensive treatments.
We first remark that the ``if'' part in the restricted conjecture of
Vitushkin follows from Calder\' on's theorem on the $L^2$ boundedness
of the Cauchy transform on Lipschitz graphs with small Lipschitz
constant. It is of interest that Calder\'on studied this problem in
connection with partial differential equations with minimal smoothness
conditions not being aware with the connections to removability. In
subsequent years the topic was studied extensively and several deep
contributions were made, see e.g. \cite{mcm},\cite{pj} and \cite{ds}.
Nevertheless it was Melnikov's discovery in \cite{me} of the relation
of the Cauchy kernel to the so-called Menger curvature that triggered
many advances during the 1990's, which eventually led to the complete
resolution of Vitushkin's conjecture. In \cite{MMV} Mattila, Melnikov
and Verdera proved Vitushkin's conjecture in the particular case where
the set $K$ is 1 Ahlfors--David regular, or in short 1-AD-regular. A
Radon measure $\mu$ is $s$-AD-regular, $s>0$, if 
\begin{equation*}\label{ad}
\frac{r^s}{C} \leq \mu(B(z,r)) \leq C r^s \text{ for }  z \in
\operatorname{spt}\mu \text{ and } 0<r<{\rm
  diam}(\operatorname{spt}(\mu)),
\end{equation*}
for some fixed constant $C$. A set $K$ is $s$-AD regular if the
measure $\ha^s \lfloor K$ is $s$-AD regular. A few years later David
characterized in \cite{Dav} the removable sets of bounded analytic
functions among sets of finite length and Tolsa gave a complete Menger
curvature integral characterization in \cite{To} of all removable
sets of bounded analytic functions. We mention that all these results
depend on the deep geometric study of the Cauchy singular integral.

A compact set $K \subset \R^n$ is said to be removable for Lipschitz
harmonic functions if whenever $D$ is an open set containing $K$ and  $f:D \ra \R$ is a Lipschitz function which is harmonic in $D \stm K$, then $f$ is harmonic in $D$. David and Mattila in \cite{dm}
characterized planar removable sets with finite length: finite length
removable sets for either bounded analytic or Lipschitz harmonic functions
are precisely the purely $1$-unrectifiable sets. 
This is one of the various reasons why Lipschitz harmonic functions are 
a natural class to study. Very recently Nazarov, Tolsa and Volberg 
\cite{ntov2} extended the result
of David and Mattila in $\Rn$ by proving that a compact set $K \subset
\Rn$ with $\ha^{n-1}(K)<\infty$ is removable for Lipschitz harmonic
functions if and only if it is purely $(n-1)$-unrectifiable. We should mention here
that both results depend heavily on singular integrals. The result of
David and Mattila is based on intricate $Tb$ theorems for
non-doubling measures and the Cauchy transform. Nazarov, Tolsa and 
Volberg base their proof on their earlier very deep work \cite{ntov},
where they prove that if $\mu$ is an $(n-1)$-AD regular measure, then
the Riesz kernel $x/|x|^n, x\in \Rn \stm \{0\},$ defines bounded
singular operators in $L^2(\mu)$ if and only if $\mu$ is $(n-1)$-uniformly
rectifiable. Uniform rectifiability can be thought as a quantitative
version of rectifiability. The Riesz kernels arise naturally in the
study of removable sets for Lipschitz harmonic functions, as one
readily sees that $\nabla \G_n=x/|x|^{n}, x \in \Rn \setminus \{0\}$,
where $\G_n=c_n |x|^{2-n}$ denotes the fundamental solution of the
Laplacian for $n \geq 3$.

Recently, significant effort has been made towards the extension of
classical Euclidean analysis and geometry into general non-Riemannian
spaces, including Carnot groups and more abstract metric measure
spaces. In particular, potential theory related to sub-Laplacians in
Carnot groups is an active research field with many recent
developments, see \cite{blu} and the references given there. In
\cite{chousionis-mattila} the problem of removability for Lipschitz
$\cL$-harmonic functions in the Heisenberg group $\mathbb{H}^n$ was
considered. It was established there that, in accordance with the
Euclidean case, the critical removability dimension is $Q-1$, where
$Q=2n+2$ denotes the Hausdorff dimension of the Heisenberg group.
Moreover, examples of separated self-similar removable sets with
positive and finite $(Q-1)$-measure were given. An essential
ingredient in order to establish the existence of such sets was the
proof of a general criterion for unboundedness of singular integrals
on self similar sets of metric groups.

The aim of the present paper is to extend the results from
\cite{chousionis-mattila} to general Carnot groups. Our first result
reads as follows.
\begin{thm}\label{pos}
Let $C$ be a compact subset of a Carnot group $\hn$ and denote by $Q$ the homogeneous
dimension of $\hn$. Let $\cL$ be the sub-Laplacian in $\GG$.
\begin{enumerate} 
\item If $\mathcal{H}^{Q-1}(C)=0$, $C$ is removable for Lipschitz
  $\cL$-harmonic functions.
\item If $\dim C>Q-1$, $C$ is not removable for Lipschitz
  $\cL$-harmonic functions.
\end{enumerate}
\end{thm}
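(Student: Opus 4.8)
The plan is to treat the two parts separately, since they require quite different tools. For part (i), the strategy is the standard removability argument adapted to the sub-Laplacian setting. Suppose $D \supset C$ is open and $f : D \to \R$ is Lipschitz with $\cL f = 0$ on $D \setminus C$. One wants to show the distribution $\cL f$, a priori supported on $C$, vanishes. Since $f$ is Lipschitz with respect to the Carnot--Carath\'eodory metric, its horizontal gradient $\nabla_\GG f$ is in $L^\infty_{\mathrm{loc}}$, so $\cL f = \mathrm{div}_\GG(\nabla_\GG f)$ is a (signed) Radon measure $\mu$ supported on $C$ with $|\mu|(B(x,r)) \lesssim \|\nabla_\GG f\|_\infty \, r^{Q-1}$ by testing against cutoffs adapted to metric balls (using that the perimeter of a Carnot ball of radius $r$ scales like $r^{Q-1}$). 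Now if $\ha^{Q-1}(C) = 0$, a covering argument forces $|\mu|(C) = 0$: cover $C$ by balls $B(x_i, r_i)$ with $\sum_i r_i^{Q-1} < \e$ and sum the estimates. Hence $\cL f = 0$ in $D$, and by hypoellipticity of $\cL$ (H\"ormander's theorem) $f$ is smooth and $\cL$-harmonic on all of $D$.

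For part (ii), the approach is to produce, on any open neighborhood $D$ of $C$, a nonconstant Lipschitz function that is $\cL$-harmonic off $C$ when $\dim C > Q-1$, which shows $C$ is not removable. The natural candidate is a Riesz-type potential built from the fundamental solution $\Gamma$ of $\cL$: one has the homogeneity $\Gamma(x) \asymp \|x\|^{2-Q}$ (Folland) and the key pointwise bound $|\nabla_\GG \Gamma(x)| \lesssim \|x\|^{1-Q}$, where $\|\cdot\|$ is a homogeneous norm. Since $\dim C > Q-1$, by Frostman's lemma there is a nonzero positive Radon measure $\nu$ supported on $C$ with $\nu(B(x,r)) \lesssim r^s$ for some $s > Q-1$. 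Set $f = \Gamma * \nu$. Then $f$ is $\cL$-harmonic on $\GG \setminus C \supset D \setminus C$ since $\cL \Gamma = -\delta_0$ (up to sign/constant), and $f$ is not $\cL$-harmonic on $D$ because $\cL f = -\nu \neq 0$. It remains to check $f$ is Lipschitz: estimate $\nabla_\GG f(x) = (\nabla_\GG \Gamma) * \nu(x)$ by splitting the integral $\int \|x^{-1}y\|^{1-Q}\,d\nu(y)$ into dyadic annuli around $x$; the annulus at scale $2^{-k}$ contributes $\lesssim 2^{k(Q-1)} \cdot 2^{-ks} = 2^{-k(s-Q+1)}$, and since $s - Q + 1 > 0$ the geometric series converges, giving a uniform bound on $\nabla_\GG f$ and hence (by the mean value / chain-of-balls property valid in Carnot groups) Lipschitz continuity of $f$ in the CC metric. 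Truncating or multiplying by a cutoff if necessary keeps $f$ defined on $D$ without affecting $\cL f$ near $C$.

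The routine-but-nontrivial ingredients I would invoke are: (a) Folland's construction and homogeneity estimates for the fundamental solution of a sub-Laplacian, including the gradient bound $|\nabla_\GG \Gamma(x)| \lesssim \|x\|^{1-Q}$; (b) hypoellipticity of $\cL$; (c) the fact that a function with bounded horizontal gradient is Lipschitz in the CC metric (and conversely), together with the scaling of metric balls and their perimeters; and (d) Frostman's lemma in the metric measure space $(\GG, \mathrm{d}_{cc})$, which holds because $\GG$ is Ahlfors $Q$-regular. I expect the main obstacle to be part (i), specifically the careful justification that $\cL f$ is a Radon measure with the $(Q-1)$-growth bound $|\mu|(B(x,r)) \lesssim r^{Q-1}$: in the Euclidean case this uses the divergence theorem on balls, and here one needs the analogous statement that Carnot balls are sets of finite perimeter with perimeter comparable to $r^{Q-1}$, plus an integration-by-parts argument pairing the $L^\infty$ vector field $\nabla_\GG f$ against test functions supported near $C$. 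Once that growth estimate is in hand, both the vanishing conclusion in (i) and the Lipschitz estimate in (ii) follow from the same dyadic/covering bookkeeping.
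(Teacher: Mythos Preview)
Your approach to part (ii) is essentially the same as the paper's: Frostman's lemma produces a measure $\nu$ with $\nu(B(p,r)) \le r^s$ for some $s>Q-1$, and one sets $f=\Gamma * \nu$. The paper establishes Lipschitz continuity by first proving a difference estimate for $\Gamma$ (essentially $|\Gamma(q^{-1}p_1)-\Gamma(q^{-1}p_2)| \lesssim d(p_1,p_2)(d(q,p_1)^{1-Q}+d(q,p_2)^{1-Q})$) and then running the same dyadic-annuli summation you describe; this avoids having to differentiate under the integral sign and sidesteps any issue about where $\nabla_\GG f$ is defined. For non-harmonicity on $D$ the paper argues by Liouville (the potential is nonnegative and nonconstant), whereas you invoke $\cL f=-\nu$ directly; both are fine.

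For part (i) your route is genuinely different from the paper's. The paper proves a full representation theorem: if $\cH^{Q-1}(C)<\infty$ and $f$ is Lipschitz, $\cL$-harmonic off $C$, then on a subdomain $f=\int_C \Gamma(q^{-1}\cdot)\,h\,d\cH^{Q-1}+H$ with $H$ harmonic and $h\in L^\infty$. Part (i) then falls out by setting $\cH^{Q-1}(C)=0$. This is heavier machinery (mollification, a sub-Riemannian divergence theorem on piecewise smooth domains, weak compactness of signed measures), but the representation formula is reused later to build removable sets of positive $(Q-1)$-measure. Your direct cutoff/covering argument is more elementary and perfectly adequate for part (i) alone. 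One comment on your phrasing: you frame the argument as ``show $\cL f$ is a Radon measure with $(Q-1)$-growth, then cover,'' but the integration-by-parts identity $\langle \cL f,\varphi\rangle=-\int \nabla_\GG f\cdot \nabla_\GG\varphi$ only exhibits $\cL f$ as an order-$1$ distribution, and promoting it to a measure is exactly the nontrivial step. It is cleaner (and what your ingredients actually deliver) to bypass the measure statement entirely: given $\varphi\in C_c^\infty(D)$, cover $C$ by balls $B(p_i,r_i)$ with $\sum r_i^{Q-1}<\epsilon$, build cutoffs $\chi_i$ with $|\nabla_\GG\chi_i|\lesssim r_i^{-1}$, set $\eta=\prod_i(1-\chi_i)$, and estimate $|\langle \cL f,\varphi\rangle|=|\langle \cL f,\varphi(1-\eta)\rangle| \lesssim \|\nabla_\GG f\|_\infty\sum r_i^{Q-1}<\epsilon$ directly. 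No perimeter estimate for Carnot balls is needed, only the existence of such cutoffs, which is elementary.
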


The proof of Theorem \ref{pos} is similar to the proof from
\cite{chousionis-mattila}. Nevertheless we decided for the convenience
of the reader to provide all of the details, although in some places the
arguments are identical to those in \cite{chousionis-mattila}. As in
\cite{chousionis-mattila} the proof of Theorem \ref{pos} relies on a
representation theorem for Lipschitz $\cL$-harmonic functions (Theorem
\ref{main}). The analogue of Theorem~\ref{main} in
\cite{chousionis-mattila} uses the divergence theorem of Franchi,
Serapioni and Serra-Cassano \cite{fss} which is known to be true only
for step two Carnot groups. 

In the case of general Carnot groups, we overcome this obstacle using the 
Euclidean regularity of the domains appearing in the proof of Theorem~\ref{main}. 
In fact, we have finite unions of bounded sets with smooth boundary, that are
sets of finite perimeter in the Euclidean sense.
On the other hand, we have also to detect the Euclidean reduced boundary, which 
we accomplish by perturbing a given piecewise smooth boundary and using the classical Sard's theorem.
Then joining the Euclidean divergence theorem for finite perimeter sets, \cite{DeGiorgi}, 
with area-type formulae for the sub-Riemannian spherical Hausdorff measure of smooth sets,
\cite{valentino1}, \cite{valentino2}, we reach the sub-Riemannian divergence formula
in this special class of domains.

Additional technical difficulties arise from the fact that, while the
fundamental solution $\G$ of the sub-Laplacian in the Heisenberg group
has an explicit formula, the corresponding fundamental solution for
general sub-Laplacians in general Carnot groups admits no such
formula. Nevertheless the fundamental solution is always
$(2-Q)$-homogeneous and this fact is essential in our proofs.

We also study the critical case (dimension $Q-1$). It is easy to
construct nonremovable sets of positive and finite $\cH^{Q-1}$ measure
(see Remark \ref{verplane}). Our second main theorem reads as follows.

\begin{thm}\label{unbrem} 
There exist sets  $K \subset \GG$ with $0<\mathcal{H}^{Q-1}(K)<
\infty$ which are removable for Lipschitz $\cL$-harmonic functions.
\end{thm}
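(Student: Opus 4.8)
The plan is to construct $K$ as a self-similar Cantor-type set adapted to the homogeneous structure of $\GG$, mimicking the classical $4$-corners Cantor set but with the dilations and the group multiplication of $\GG$ replacing the Euclidean ones. First I would fix a ``vertical'' hyperplane-like subgroup $\V$ of topological codimension one (for instance the subgroup corresponding to killing one horizontal coordinate) and, inside a suitable box centered at the origin, choose a finite family of dilations $\phi_i = p_i \cdot \delta_r(\cdot)$, $i=1,\dots,m$, with a common contraction ratio $r \in (0,1)$ and translation parameters $p_i$ chosen so that: (a) the images $\phi_i(Q_0)$ of the initial box $Q_0$ are pairwise disjoint and contained in $Q_0$; and (b) the similarity dimension satisfies $m r^{Q-1} = 1$, i.e.\ $m = r^{-(Q-1)}$, so that the attractor $K$ of the iterated function system $\{\phi_i\}$ has Hausdorff dimension exactly $Q-1$. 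A standard argument (open set condition, satisfied by construction via the box $Q_0$) then gives $0 < \ha^{Q-1}(K) < \infty$. So far this is routine once the combinatorial/metric data are pinned down; the subtlety is choosing the $p_i$ and $r$ so that later the singular-integral estimate works.

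Next, by part (1) of Theorem \ref{pos} (or rather its proof) and the fundamental-solution machinery, removability of a set with finite $\ha^{Q-1}$ measure is governed by the $\cL$-Riesz-type singular integral with kernel $\nabla_\GG \G$, which is $(1-Q)$-homogeneous. Concretely, one reduces (via the representation Theorem \ref{main} and a duality/$Tb$-type argument exactly as in \cite{chousionis-mattila}) removability of $K$ to showing that the truncated singular integral operators associated to the kernel $K_\GG := \nabla_\GG \G$ are \emph{not} bounded on $L^2(\ha^{Q-1}\lfloor K)$ — equivalently, that no nonzero finite measure on $K$ has bounded Cauchy-type transform. So the real content is an \emph{unboundedness} statement for a convolution-type singular integral on the self-similar set $K$.

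The main step, and the expected main obstacle, is precisely this unboundedness. Here I would invoke the general criterion for unboundedness of singular integrals on self-similar subsets of metric groups proved in \cite{chousionis-mattila} (the ``essential ingredient'' the introduction refers to): if the kernel is homogeneous of the critical degree, antisymmetric (or has a suitable nonvanishing ``main part''), and the self-similar set is built from translations and dilations in the group with the right similarity dimension, then the associated singular integral is unbounded on the natural Hausdorff measure, provided a single explicit non-degeneracy inequality holds for the kernel evaluated on the finitely many displacement vectors $p_j^{-1}\cdot p_i$ arising in the construction. Thus the plan is: (i) verify that $K_\GG = \nabla_\GG \G$ satisfies the structural hypotheses of that criterion — homogeneity of degree $1-Q$, smoothness away from the origin, and the relevant antisymmetry/non-vanishing, using only that $\G$ is $(2-Q)$-homogeneous and $\cL$-harmonic off the origin; (ii) choose the IFS data $(r, p_1,\dots,p_m)$ in the construction above so that the required non-degeneracy inequality is satisfied — this is where one must be careful, and if the ``obvious'' symmetric choice of $p_i$ makes the relevant sum vanish by cancellation, one perturbs the translation parameters (keeping disjointness and the dimension relation $mr^{Q-1}=1$) to break the cancellation, exactly as is done in the Heisenberg case. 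Applying the criterion then yields that $K_\GG$-singular integrals are unbounded on $L^2(\ha^{Q-1}\lfloor K)$, and combining this with the removability criterion from the proof of Theorem \ref{pos} gives that $K$ is removable, completing the proof.

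One technical point worth flagging: unlike in $\mathbb{H}^n$, the kernel $\nabla_\GG \G$ is not given in closed form, so step (i) and the non-degeneracy check in step (ii) must be done qualitatively, relying only on homogeneity, regularity, and the asymptotic expansion of $\G$ near infinity/zero (and, if needed, a limiting/normalization argument to extract the leading homogeneous behavior of the displacement sums as $r \to 0$). This is exactly the sort of obstacle the introduction anticipates, and it is the place where the general-Carnot-group argument genuinely departs from \cite{chousionis-mattila}.
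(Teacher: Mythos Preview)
Your overall architecture is correct and matches the paper: build a separated self-similar set $K$ of dimension $Q-1$, invoke the unboundedness criterion of \cite{chousionis-mattila} for the singular integral with kernel $k=\nabla_\GG\Gamma$, and then deduce removability by contradiction via the representation Theorem~\ref{main} and a maximal-operator comparison (the paper's Lemma~\ref{semmes} and Lemma~\ref{compop}). So the skeleton is right.

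The genuine gap is exactly the step you yourself flag as ``the expected main obstacle.'' The criterion in Theorem~\ref{unb} requires a \emph{specific} nonvanishing: one must exhibit a fixed point $x=S_w(x)\in K$ with $\int_{K\setminus S_w(K)} k_i(x^{-1}\cdot y)\,d\mathcal H^{Q-1}(y)\neq 0$ for some coordinate $k_i$. Your plan is to verify this by a ``qualitative'' argument, perturbing the translations $p_i$ to break cancellation. But with no explicit formula for $\Gamma$ and hence for $k$, you have no handle on the sign or size of $k_i(p_j^{-1}\cdot p_i)$, and no mechanism to guarantee that a perturbation makes the integral nonzero rather than merely moving the cancellation around. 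This is not a technicality; it is the whole difficulty that separates general Carnot groups from~$\mathbb H^n$.

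The paper resolves this with an idea you are missing. Since each $k_i=\Omega_i/\|\cdot\|_\Gamma^{Q-1}$ with $\Omega_i$ continuous and $0$-homogeneous, there is a nonempty open set $U$ in the unit sphere on which some $\Omega_i$ (hence $k_i$) is strictly positive. One then builds $K$ so that (a) $K\subset\widehat U$, the dilation cone over $U$, and (b) the origin $0$ is a fixed point of one of the contractions, $S_0=\delta_{r_0}$. Then for $x=0$ the integrand $k_i(y)$ is strictly positive on $K\setminus S_0(K)\subset\widehat U\setminus\{0\}$, so the nonvanishing is automatic --- no explicit kernel, no perturbation argument. Engineering (a) and (b) simultaneously with separation and the dimension constraint $r_0^{Q-1}+Mr^{Q-1}=1$ is the content of Theorem~\ref{th-example} and Lemma~\ref{lem-example}; note in particular that the contraction ratios are \emph{not} all equal (contrary to your setup $mr^{Q-1}=1$), and $K$ does not sit inside a vertical subgroup but inside a dilation cone. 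A minor side remark: your maps $\phi_i=p_i\cdot\delta_r(\cdot)$ do not have $p_i$ as fixed point; the correct similarity is $p\mapsto p_i\cdot\delta_r(p_i^{-1}\cdot p)$.
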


In \cite{chousionis-mattila} such sets were constructed in the
Heisenberg group $\mathbb{H}^n$ based on Strichartz-type tilings, see
\cite{str:self-sim1}. However in general Carnot groups such tilings do
not exist, and we provide an alternate constructive argument involving
separated self-similar Cantor subsets in vertical subgroups of $\GG$.
As in the Euclidean case we need to consider singular integrals with
respect to the kernel $k= \anah \G$, which is $(1-Q)$-homogeneous.
Roughly speaking, if one is able to prove that a certain singular
integral is unbounded on $L^2(\cH^{Q-1} \lfloor K)$, then the set $K$ is
removable. Our idea is to construct a separated self similar set $K$,
with $0<\ha^{Q-1}(K)<\infty$, which lives on a dilation cone where at
least one coordinate of the kernel $k$ keeps constant sign. Moreover
the set $K$ is constructed in such a way that it has a fixed point at
the origin. These properties enable us to apply directly the
unboundedness criterion for singular integrals on self similar sets 
from \cite{chousionis-mattila} (reproduced in this paper as Theorem
\ref{unb}).

Removability of sets can be studied for other partial differential
equations, and in other regularity classes. In \cite{ch-tys},
quantitative estimates on the size of removable sets for solutions of
a wide variety of partial differential equations in Carnot groups are
given.

The paper is organised as follows. In section \ref{sec:background} we
lay down the necessary background in Carnot groups as well as some
basic properties of their sub-Laplacians. In section \ref{sec:crit} we
prove a representation theorem for Lipschitz $\cL$-harmonic functions
outside some compact set $K$, namely Theorem \ref{main}, and this
leads to the proof of Theorem \ref{pos}. In section \ref{sec:rem} we
provide examples of removable sets with positive and finite
$\ha^{Q-1}$-measure.

\section{Definitions and notation}\label{sec:background}

A \define{Carnot group} is a connected, simply connected and nilpotent Lie group $\GG$,
with graded Lie algebra
$$
\fg=\fv_1\oplus\cdots\oplus \fv_s\,,
$$
such that $[\fv_1,\fv_i]=\fv_{i+1}$ for $i=1,2,\ldots,s-1$ and
$[\fv_1,\fv_s]=0$.
Under these conditions the exponential mapping $\exp:\fg\to\GG$ is bianalytic, hence 
we can canonically identify elements $\fg$, namely left invariant vector fields, with elements of $\GG$.
The integer $s \ge 1$ is the \define{step} of $\GG$. 
We denote the group law in $\GG$ by $\cdot$ and the identity element of $\GG$ by $0$.

We fix an inner product $\langle \ , \ \rangle$ in $\fv_1$ and let $X_1,\ldots,X_{m}$ be an orthonormal basis for $\fv_1$ relative to this inner product. Using
this basis, we construct the \define{horizontal subbundle} $H\GG$ of
the tangent bundle $T\GG$ with fibers $H_p\GG=\spa\lbrace X_1(p),
\ldots, X_{m}(p)\rbrace$, $p\in \GG$. A left-invariant vector field $X$
on $\GG$ is \define{horizontal} if it is a section of $H\GG$. The inner
product on $\fv_1$ defines a left invariant family of inner products
on the fibers of the horizontal subbundle.

We denote by $d$ the \define{Carnot--Carath\'eodory metric} on $\GG$,
defined by infimizing the lengths of horizontal paths joining two
fixed points, where the horizontal length is computed using the
aforementioned inner product. More specifically we define:

\begin{df}
\label{subunit}
An absolutely continuous curve $\gamma:[0,T]\ra \hn$ will be called sub-unit, with respect to the vector fields $X_1,\dots,X_m,$ if there exist real measurable functions $a_j:[0,T]\to\R$, with $j=1,\ldots,m$,
such that $\sum_{j=1}^{m}  a_j(t)^2 \leq 1$ for a.e.\ $t\in[0,T]$ and 
$$\dot \g (t)=\sum_{j=1}^m a_j(t)X_j(\g (t))\ \ \text{for a.e.}\ t\in[0,T].$$
\end{df}

\begin{df}
\label{cc}
For $p,q \in \hn$ their Carnot-Carath\'eodory distance is
\begin{equation*}
\begin{split}
d(p,q)=\inf \{T>0:\ &\text{there is a sub-unit curve} \ \g:[0,T]\ra \hn \\ 
&\quad \quad \quad \text{such that} \ \g(0)=p \ \text{and} \ \g(T)=q\}.
\end{split}
\end{equation*}
\end{df}
It follows by Chow's theorem that the above set of curves joining $p$ and $q$ is not empty and hence $d$ is a metric on $\hn$. The closed and open balls with respect to $d$ will be denoted by $B(p,r)$ and $U(p,r)$ respectively.

For each $t>0$, we define $\delta_t:\fg\to\fg$ by setting $\delta_t(X)=t^iX$ if $X\in \fv_i$ and extending 
the mapping by linearity. The identification of the Lie algebra with the Lie group via the exponential mapping allows us to introduce dilations on $\GG$, that we also denote by $\delta_t$. 
Then $(\delta_t)_{t>0}$ is the
one-parameter family of \define{dilations} of $\GG$ satisfying
$d(\delta_t(p),\delta_t(q))=t d(p,q)$ for $p,q\in \GG$. Another family
of automorphisms in $\GG$ are the \define{left translations}
$\tau_q:\GG \ra \GG$ defined by $\tau_q(x)= q\cdot x,x \in \GG,$ 
for all $q \in \GG.$ We note also that the metric $d$ is left
invariant, i.e., $d( q \cdot p_1)=d(q \cdot p_2)$ for $q,p_1,p_2 \in
\hn$.

The Jacobian determinant of $\delta_t$ (with respect to Haar measure) is
everywhere equal to $t^Q$, where
$$
Q=\sum_{i=1}^s i\dim \fv_i
$$
is the \define{homogeneous dimension} of $\GG$.
{\bf In this paper, we always assume $Q\ge 3$.}

A measurable function $f$ on $\GG$ will be called
$\lambda$-\define{homogeneous}, or homogeneous of degree $\lambda$, if
$f \circ \delta_t=t^\lambda f$ for all $t>0$. A continuous function
$\|\cdot\|:\GG \ra [0,\infty)$ is called a \define{homogeneous norm} if  
 $\|\delta_t(p)\|=t\|p\|$ for all $t>0$ and $p \in \GG$ and $\|p\|>0$
 for all $p \neq 0$. A typical example of a homogeneous norm is the
 function
$$
\|p\|_{cc}:=d(p,0).
$$
All homogeneous norms in $\GG$
are equivalent: recall that two norms $\|\cdot\|_1$ and $\|\cdot\|_2$
are said to be equivalent if there exists a positive constant $c$ such
that
\begin{equation}\label{companorms}
c^{-1}\|p\|_2 \leq \|p\|_1 \leq c \|p\|_2 \qquad \mbox{for all $p \in \GG$.}
\end{equation} 
Proofs of these facts, as well as other properties of homogeneous
norms, can be found in~\cite{blu}.

Since $\GG$ is identified with the linear space $\fg$, we can fix a {\em graded basis} of $\fg$,
hence we can identify elements of $\GG$ with elements of $\R^N$,
where $N=\sum_{i=1}^s\dim \fv_i$.  A graded basis in $\fg$ respects the grading, that is there
exists $s$ ordered subsets of the basis that are in turn bases of the single layers $\fv_i$. 
One can check that translations with respect to graded coordinates preserve the Lebesgue measure
in $\R^N$. As a consequence, the Haar measure on $\GG$ can be obtained from the Lebesgue measure on $\R^N$.
It also agrees (up to a constant) with the $Q$-dimensional Hausdorff measure in the metric space $(\GG,d)$. 

In this paper we will denote the Haar measure of a set $E \subset \GG$ by
$|E|$, and we will write integrals with respect to this measure as
$\int_E f(x) \, dx$ or $\int_E f$. We refer the reader to
\cite{mont:tour}, \cite{blu} or \cite{cdpt:survey} for further
information on Carnot groups and their metric geometry.

In particular a fixed basis $X_1,\ldots,X_m$ of the first layer $\fv_1$ is fixed.
This is the so-called horizontal frame, that linearly spans all of the horizontal directions.
If $f$ is a real function defined on an open set of $\hn$ its
$\h$-gradient is given by 
$$\nabla_\h f=(X_1f,\dots,X_m f).$$
The $\h$-divergence of a function $\phi=(\phi_1,\dots,\phi_{m}):\hn
\ra \R^{m}$ is defined as 
$$\di_\h \phi=\sum_{i=1}^m X_i\phi_i.$$

\begin{rem}\label{valerem}
For our purposes, a sub-Riemannian divergence theorem is necessary.
We will deal with regular domains comprised of finite unions of smooth
open and bounded sets. 
Let $\div$ denote the standard divergence in $\R^N$ and let $X$ be a
$C^1$ smooth vector field on $\R^N$ with 
\[
 X=(a_1,\ldots,a_N)\sim a_1\der_{x_1}+\cdots a_N\der_{x_N}.
\]
If $\Omega$ is a bounded set of finite perimeter and $f$ is a $C^1$
smooth real valued function on an open neighborhood of
$\overline\Omega$, then
\begin{equation}\label{div}
 \int_\Omega Xf =\int_{\cF^*\Omega} f\ \lan X,\nu \ran \;
 d\|\partial\Omega\|-\int_\Omega f\div X, 
\end{equation}
where $\lan\cdot,\cdot\ran$ denotes the Euclidean scalar product,
$\nu$ is the generalized outer normal to $\Omega$, $\cF^*\Omega$ is
the reduced boundary and $\|\partial\Omega\|$ is the perimeter measure
of $\Omega$, \cite{DeGiorgi}. The validity of \eqref{div} is seen from
the following equalities:
\[
\begin{split}
 \int_\Omega Xf &= \int_\Omega \sum_{l=1}^N a_l\, \der_{x_l}f
=\int_\Omega \sum_{l=1}^N \Big(\der_{x_l}(a_l f)-f\der_{x_l}a_l\Big)  \\
&=\int_\Omega\div(f X)-\int_\Omega f\div X \\
&=\int_{\cF^*\Omega} f\,\lan X,\nu\ran\,d\|\der\Omega\|-\int_\Omega f\div X .
\end{split}
\]

All the left invariant vector fields $X$ of a Carnot group satisfy
$\div X=0$. As a corollary of \eqref{div}, it follows therefore that
\begin{equation}\label{valentino}
 \int_\Omega \divh F=\int_{\cF^*\Omega} \sum_{j=1}^m f_j\,\lan X_j,\nu\ran\, d\|\der\Omega\|=
\int_{\cF^*\Omega} \lan F,{\bf \nu_\GG}\ran\,d\|\der\Omega\|\,,
\end{equation}
where ${\bf \nu_\GG}=\big(\lan X_1,\nu\ran,\ldots,\lan
X_m,\nu\ran\big)$ is the {\em non-normalized horizontal normal}.
\end{rem}

The sub-Laplacian in $\hn$ is given by
$$\dh= \sum_{i=1}^m X_i^2$$
or equivalently 
$$\dh=\divh \nabla_\h .$$ 

\begin{df}\label{harmdef}
Let $D\subset \hn$ be an open set. A real valued function $f \in
C^2(D)$ is called $\dh$-harmonic, or simply harmonic, on $D$ if $\dh
f= 0$ on $D$.
\end{df}

We shall consider removable sets for Lipschitz solutions of the sub-Laplacian:

\begin{df}
\label{rem} 
A compact set $C \subset \hn$ will be called removable, or $\dh$-removable for Lipschitz $\dh$-harmonic functions, if for every domain $D$ with $C \subset D$ and every Lipschitz function $f:D \ra \R$,
$$\dh f=0 \ \text{in} \  D\stm C \ \text{implies} \ \dh f = 0 \ \text{in} \ D.$$
\end{df}

As usual we denote for any $D \subset \hn$ and any function $f:D \ra \R$,
$$\text{Lip} (f):= \sup_{x,y \in D} \frac{|f(x)-f(y)|}{d(x,y)},$$
and we will also use the following notation for the upper bound for the Lipschitz constants in  Carnot-Carath\'eodory balls:
$$\text{Lip}_{\text{B}} (f):= \sup \{\text{Lip} (f|_{U_c(p,r)}):p \in D, r>0, U_c(p,r) \subset D\}.$$

The following proposition is known. It follows, for example, from the Poincar\'e inequality, see Theorem 5.16 in \cite{cdpt:survey} and the arguments for its proof on pages 106-107. A simple direct proof which applies directly in our setting can be found in \cite{chousionis-mattila}.

\begin{pr}
\label{panulip}
Let $D \subset \hn$ be a domain and let $f \in C^1(D)$. Then $\text{Lip}_{\text{B}} (f) < \infty$ if and only if $\|\anah f\|_\infty < \infty$. More precisely, there is a constant $c(\GG)$ depending only on $\GG$ such that
\begin{equation}\label{panulip1}
\|\anah f\|_\infty\leq \text{Lip}_{\text{B}} (f)\leq c(\GG)\|\anah f\|_\infty.
\end{equation}
\end{pr}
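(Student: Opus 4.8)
The plan is to establish the two inequalities in \eqref{panulip1} separately, the left one being essentially immediate and the right one requiring the connectivity afforded by Chow's theorem together with a controlled-length horizontal path construction. For the left inequality, I would fix $p \in D$ and a horizontal direction, integrate $\anah f$ along a short sub-unit curve emanating from $p$ in that direction, and use the fundamental theorem of calculus: if $\gamma$ is sub-unit with $\dot\gamma(t) = \sum_j a_j(t) X_j(\gamma(t))$ and $\sum_j a_j(t)^2 \le 1$, then $\frac{d}{dt} f(\gamma(t)) = \langle \anah f(\gamma(t)), (a_1(t),\dots,a_m(t))\rangle$, so $|f(\gamma(T)) - f(\gamma(0))| \le T \|\anah f\|_\infty$. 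Since $d(p,q) = \inf T$ over such curves inside any ball containing both points, taking the supremum of the difference quotient over $U_c(p,r) \subset D$ gives $\operatorname{Lip}(f|_{U_c(p,r)}) \le \|\anah f\|_\infty$, hence $\operatorname{Lip}_{\text{B}}(f) \le \|\anah f\|_\infty$. (One must be slightly careful that the connecting curves realizing the CC-distance between nearby points of a ball can be taken to stay inside the ball up to a negligible enlargement; this is standard and follows from the length estimate.)

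For the right inequality, suppose $\operatorname{Lip}_{\text{B}}(f) < \infty$ and fix $p \in D$. I want to bound $|X_j f(p)|$ for each $j$. The idea is to use the integral curve of $X_j$ through $p$: let $t \mapsto \gamma_j(t)$ solve $\dot\gamma_j = X_j(\gamma_j)$, $\gamma_j(0) = p$. This curve is sub-unit, and for small $t$ it stays in some ball $U_c(p,r) \subset D$; moreover $d(p, \gamma_j(t)) \le |t|$ by definition of the CC-distance (the curve itself is an admissible competitor). Hence
\[
|X_j f(p)| = \lim_{t \to 0} \frac{|f(\gamma_j(t)) - f(p)|}{|t|} \le \limsup_{t\to 0} \frac{|f(\gamma_j(t)) - f(p)|}{d(p,\gamma_j(t))} \cdot \frac{d(p,\gamma_j(t))}{|t|} \le \operatorname{Lip}_{\text{B}}(f).
\]
This already gives $\|\anah f\|_\infty \le \sqrt{m}\, \operatorname{Lip}_{\text{B}}(f)$ componentwise, so one may take $c(\GG) = \sqrt{m}$ (or even $c(\GG)=1$ if one works with the norm $\|\anah f\| = \sup_j |X_j f|$ adapted to the basis, then converts). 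In either formulation the constant depends only on $\GG$ through $m = \dim \fv_1$.

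The main obstacle — really the only delicate point — is the justification that the competitor curves can be confined to the prescribed ball $U_c(p,r) \subset D$ rather than merely to $D$ itself, so that the difference quotients being bounded by $\operatorname{Lip}(f|_{U_c(p,r)})$ is legitimate; for the left inequality this needs the CC-length of the connecting curve to be close to $d(p,q)$ and the curve to have small diameter, and for the right inequality one needs $t$ small enough that $\gamma_j([-t,t]) \subset U_c(p,r)$, which is automatic by continuity. Since the proposition asserts these are known facts (with a direct proof available in \cite{chousionis-mattila} and a derivation from the Poincar\'e inequality in \cite{cdpt:survey}), I would present the above as the self-contained argument and refer to those sources for the routine metric-geometry lemmas about length-minimizing horizontal curves staying in balls.
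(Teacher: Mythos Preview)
The paper does not prove Proposition~\ref{panulip}; it only cites the Poincar\'e inequality approach in \cite{cdpt:survey} and a direct argument in \cite{chousionis-mattila}. Your proposal supplies exactly the kind of direct argument those references contain, so the overall strategy is fine. Two points need correction, though.

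First, you have the two inequalities in \eqref{panulip1} interchanged. What you call the ``left inequality'' actually establishes $\text{Lip}_{\text{B}}(f)\le \|\anah f\|_\infty$, which is the \emph{right} half (and with $c(\GG)=1$, stronger than stated), while what you call the ``right inequality'' addresses $\|\anah f\|_\infty \le \text{Lip}_{\text{B}}(f)$, the \emph{left} half. For the genuine left inequality you only obtain the constant $\sqrt{m}$ from the componentwise bound $|X_jf(p)|\le \text{Lip}_{\text{B}}(f)$; to get constant~$1$ as asserted, run your same argument with the sub-unit curve whose direction at $p$ is $\anah f(p)/|\anah f(p)|$, so that the derivative at $t=0$ equals $|\anah f(p)|$ itself.

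Second, and more substantively, your claim that $\text{Lip}_{\text{B}}(f)\le \|\anah f\|_\infty$ with constant~$1$ overreaches. The obstacle you flag --- that a near-geodesic between $x,y\in U_c(p,r)$ need not stay in $U_c(p,r)$, nor even in $D$ --- is not a ``standard'' technicality that evaporates: Carnot--Carath\'eodory balls are in general \emph{not} geodesically convex, and the near-minimizing curve is only guaranteed to lie in $B(x,d(x,y)+\eps)\subset B(p,3r+\eps)$, which may exit $D$. This is precisely why the proposition carries a group-dependent constant $c(\GG)$ rather than $1$. One honest route is to invoke a quasiconvexity property of CC balls (any two points of $U_c(p,r)$ can be joined by a horizontal curve in $U_c(p,r)$ of length at most $c(\GG)\,d(x,y)$), which holds with a constant depending only on $\GG$ by scale and translation invariance; this yields $\text{Lip}(f|_{U_c(p,r)})\le c(\GG)\|\anah f\|_\infty$ via your fundamental-theorem-of-calculus computation. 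With these two adjustments your argument is complete and matches what the cited direct proof does.
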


Fundamental solutions for sub-Laplacians in homogeneous Carnot groups are defined in accordance with the classical Euclidean setting.
\begin{df}[Fundamental solutions] A function $\Gamma : \R^{N} \stm
  \{0\} \ra \R$ is a fundamental solution for $\dh$ if: 
\begin{enumerate}
\item $\G \in C^{\infty}(\R^{N} \stm \{0\})$,
\item $\G \in L_{\text{loc}}^1(\R^{N})$ and $\lim_{\|p\|_{cc} \ra \infty}\G(p)\ra 0$,
\item for all $\varphi \in C_0^{\infty} (\R^{N})$,
$$\int_{\R^{N}}\G(p) \dh \varphi (p) \, dp=-\varphi (0).$$
\end{enumerate}
\end{df}

It also follows easily, see Theorem 5.3.3 and Proposition 5.3.11 of
\cite{blu}, that for every $p \in \hn$, 
\begin{equation}
\label{fundconv}
\G \ast \dh \varphi (p)=-\varphi(p) \ \text{for all} \ \varphi \in C_0^{\infty} (\R^{N}).
\end{equation}
Convolutions are defined as usual by $$f\ast g (p)=\int f(q^{-1}\cdot p)g(q)\,dq$$
for $f,g \in L^1$ and $p \in \hn$.

A very general result due to Folland \cite{fol} guarantees the
existence of a fundamental solution for each sub-Laplacian on a
homogeneous Carnot group with homogeneous dimension $Q\ge 3$. The
following proposition gathers some well-known properties of such
fundamental solutions. Proofs can be found in \cite{blu}.  

\begin{pr}[Properties of $\G$] Let $\G$ be the fundamental solution of
  $\dh$. Then for all $p \in \GG \stm \{0\}$ and all $t>0$:
\begin{enumerate}
\item (\textit{Symmetry}) $\G(p^{-1})=\G(p)$,
\item ($\delta_t$-\textit{homogeneity}) $\G(\delta_t(p))=t^{2-Q}\G(p)$,
\item (\textit{Positivity}) $\G(p)>0$.
\end{enumerate}
\end{pr}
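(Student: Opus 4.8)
\noindent\emph{Proof proposal.} The plan is to deduce all three assertions from a single principle: that the fundamental solution $\G$ is \emph{uniquely} determined by properties (i)--(iii). I would first establish this uniqueness. If $\G_1,\G_2$ both satisfy (i)--(iii) and $h:=\G_1-\G_2\in L^1_{loc}(\R^N)$, then $\int h\,\dh\varphi=0$ for every $\varphi\in C_0^\infty(\R^N)$. Since each $X_i$ is divergence free (Remark~\ref{valerem}), integration by parts gives $\int(X_if)g=-\int f(X_ig)$ and hence $\int(\dh f)g=\int f(\dh g)$, so the vanishing above says $\dh h=0$ in the sense of distributions on all of $\R^N$. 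Because $X_1,\dots,X_m$ generate $\fg$, the operator $\dh$ is hypoelliptic, so $h\in C^\infty(\R^N)$ and is $\dh$-harmonic everywhere; being also bounded (it tends to $0$ at infinity), $h$ is constant by the Liouville theorem for sub-Laplacians on Carnot groups (\cite{blu}), whence $h\equiv 0$. It then suffices, for each property, to exhibit a second fundamental solution that reduces to $\G$ after the relevant transformation.

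\emph{Symmetry.} Put $\check\G(p):=\G(p^{-1})$. The inversion $p\mapsto p^{-1}$ is a proper diffeomorphism of $\R^N$ fixing the origin and preserving Lebesgue measure (in graded exponential coordinates it is just $p\mapsto -p$), while $\|p^{-1}\|_{cc}=d(p^{-1},0)=d(0,p)=\|p\|_{cc}$ by left invariance of $d$; hence $\check\G$ inherits (i) and (ii). For (iii), I would evaluate \eqref{fundconv} at $p=0$ with $g=\dh\varphi$: $-\varphi(0)=(\G\ast\dh\varphi)(0)=\int\G(q^{-1})\,\dh\varphi(q)\,dq=\int\check\G(q)\,\dh\varphi(q)\,dq$. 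So $\check\G$ is a fundamental solution and uniqueness yields $\check\G=\G$. \emph{Homogeneity.} Fix $t>0$ and set $\G_t(p):=t^{Q-2}\G(\delta_t p)$; (i) and (ii) for $\G_t$ follow from those of $\G$ using $\|\delta_t p\|_{cc}=t\|p\|_{cc}$ and $\mathrm{Jac}(\delta_t)=t^Q$. Since $X_i\in\fv_1$ one has $X_i(\varphi\circ\delta_t)=t\,(X_i\varphi)\circ\delta_t$, so $\dh$ is $\delta_t$-homogeneous of degree two: $\dh(\varphi\circ\delta_t)=t^2(\dh\varphi)\circ\delta_t$. The substitution $q=\delta_t p$ (with $dp=t^{-Q}\,dq$), combined with this identity applied to $\varphi\circ\delta_{1/t}$, gives $\int\G_t(p)\,\dh\varphi(p)\,dp=\int\G(q)\,\dh(\varphi\circ\delta_{1/t})(q)\,dq=-\varphi(0)$; hence $\G_t=\G$, that is $\G(\delta_t p)=t^{2-Q}\G(p)$.

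\emph{Positivity.} First $\G\not\equiv 0$, since otherwise \eqref{fundconv} would force $0=-\varphi(0)$ for every $\varphi$. Nonnegativity of $\G$ is the one genuinely analytic input and is classical in the potential theory of sub-Laplacians: the heat semigroup $(e^{\tau\dh})_{\tau>0}$ is a positivity-preserving symmetric Markov semigroup with smooth, strictly positive kernel $h_\tau$ (hypoellipticity and the parabolic maximum principle), and since $Q\ge 3$ makes $\dh$ transient one has $\G(p)=\int_0^\infty h_\tau(p)\,d\tau$, so $\G\ge 0$. Equivalently, $\G$ is the increasing limit as $R\to\infty$ of the Green functions of the balls $U(0,R)$, each nonnegative by the minimum principle; see \cite{blu}. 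Finally, $\G$ is $\dh$-harmonic and $\ge 0$ on the connected open set $\GG\setminus\{0\}$ and does not vanish identically there — by the homogeneity just established it is nonzero along the whole dilation ray through any point where it is positive — so the Harnack inequality / strong minimum principle for $\dh$ (Bony; see \cite{blu}) gives $\G>0$ on $\GG\setminus\{0\}$.

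\emph{Main obstacle.} Parts (1) and (2) are soft consequences of uniqueness together with two elementary facts about $\dh$: its invariance under the inversion (read off directly from \eqref{fundconv}) and its degree-two homogeneity under $\delta_t$. The substantive step is part (3): the nonnegativity of $\G$ rests on the positivity of the sub-Laplacian heat kernel — equivalently, on $\G$ being a potential in the transient regime $Q\ge 3$ — and strict positivity then rests on the Harnack inequality; both ingredients I would import from \cite{blu} rather than reprove. This is the step I expect to require the most external machinery.
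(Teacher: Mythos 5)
The paper does not prove this proposition at all: it simply points the reader to \cite{blu} (``Proofs can be found in \cite{blu}''), so there is no in-paper argument to compare against. Your reconstruction --- uniqueness of the fundamental solution via hypoellipticity plus the Liouville theorem, then recognizing $\G\circ\iota$ (with $\iota(p)=p^{-1}$) and $t^{Q-2}\,\G\circ\delta_t$ as fundamental solutions, and finally positivity via the heat-kernel/Green-function representation together with the strong minimum principle on the connected open set $\GG\stm\{0\}$ --- is exactly the standard argument of Folland and of \cite{blu}. The homogeneity and positivity parts are sound as written (including the measure-preservation of $\iota$ and $\mathrm{Jac}(\delta_t)=t^Q$ in graded exponential coordinates, and the identity $\dh(\varphi\circ\delta_t)=t^2(\dh\varphi)\circ\delta_t$).

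The one step you should repair is the symmetry argument. With the paper's convolution convention $f\ast g(p)=\int f(q^{-1}\cdot p)\,g(q)\,dq$, what follows directly from the definition of a fundamental solution and the \emph{left}-invariance of the $X_j$ is
\[
\int \G(p^{-1}\cdot q)\,\dh\varphi(q)\,dq=-\varphi(p),
\]
obtained by applying the defining identity to $\varphi\circ\tau_p$. By contrast, \eqref{fundconv} asserts $\int \G(q^{-1}\cdot p)\,\dh\varphi(q)\,dq=-\varphi(p)$, and the two statements coincide precisely when $\G$ is symmetric: the inversion $\iota$ intertwines left-invariant with right-invariant fields, so $\dh(\varphi\circ\iota)\ne(\dh\varphi)\circ\iota$ in general and one cannot pass from the first identity to the second by a change of variables. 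Hence \eqref{fundconv} evaluated at $p=0$ is not an independent input but is, modulo your own uniqueness lemma, the symmetry statement itself; deriving (1) from it is circular unless \eqref{fundconv} has been established without symmetry. The standard non-circular route (the one in \cite{blu}) uses the formal self-adjointness of $\dh$: pairing the left-invariant identity twice against $\dh\varphi$ and $\dh\psi$ gives $\iint\bigl[\G(y^{-1}\cdot x)-\G(x^{-1}\cdot y)\bigr]\dh\varphi(x)\,\dh\psi(y)\,dx\,dy=0$ for all test functions, from which $\G=\G\circ\iota$ follows by density and hypoellipticity; your uniqueness framework then closes the argument exactly as you intended.
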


The function
$$
\|p\|_\G = \begin{cases} \G(p)^{\frac{1}{2-Q}} &\mbox{if } p \in \GG
  \stm \{0\} \\ 0 & \mbox{if } p=0.  \end{cases}
$$
is a symmetric homogeneous norm which is $C^\infty$ away from the
origin. Let 
$$
d_\G (p,q)=\|p^{-1} \cdot q\|_\G
$$
be the quasi-distance defined by $\|\cdot\|_\G$. We will denote the
corresponding open and closed balls by $U_\G (p,r)$ and $B_\G (p,r)$
respectively. Note also that by (\ref{companorms}) $d$ and $d_\G$ are
globally equivalent. 

Let $k=\anah \Gamma$, then $k=(k_1,\dots,k_{m}):\GG \stm \{0\} \ra
\R^{m}$, and  
$$
k(p)=\anah \G (p)=\anah (\|p\|_\G^{2-Q})=(2-Q)\frac{\anah
  \|p\|_\G}{\|p\|_\G^{Q-1}} := \frac{\Omega (p)}{\|p\|_\G^{Q-1}} 
$$
for $p \in \GG \stm \{0\}$. Furthermore $\Omega$ is smooth in $\GG
\stm \{0\}$  and $\delta_t$-homogeneous of degree zero, which in
particular implies that $k$ is $(1-Q)$-homogeneous and  
\begin{equation}
\label{sizeest}
|k(p)| \lesssim \|p\|_\G^{1-Q}
\end{equation}
for $p \in \GG \stm \{0\}$. Notice also that
\begin{equation}
\label{coorker}
k_i (p)=\frac{\O_i(p)}{\|p\|_\G^{Q-1}}, \qquad p \in \GG \stm \{0\},
\end{equation}
where $\O=(\O_1, \dots, \O_m)$ and every function $\O_i$ is smooth and
homogeneous of degree zero.

We denote by $\mathcal{H}^s,s\geq 0,$ the $s$-dimensional Hausdorff
measure obtained from the Carnot-Caratheodory metric $d$, i.e. for $E
\subset \GG$ and $\delta >0$, $\mathcal{H}^s (E)=\sup_{\delta>0}
\mathcal{H}^s_\delta (E)$, where 
$$
\mathcal{H}^s_\delta(E)=\inf \left\{\sum_i  \diam(E_i)^s: E \subset
\bigcup_i E_i,\diam (E_i)<\delta \right\}.
$$
In the same manner the $s$-dimensional spherical Hausdorff measure for
$E \subset \GG$ is defined as $\ss^s (E)=\sup_{\delta>0} \ss^s_\delta
(E)$, where 
$$
\ss^s_\delta(E)=\inf \left\{\sum_i  r^s_i: E \subset \bigcup_i
  B(p_i,r_i),r_i \leq \delta,p_i \in \GG \right\}.
$$
We will denote by $\cH^s_\G$ and $\cS^s_\G$ the Hausdorff and
spherical Hausdorff measures with respect to  $d_\G$. Since
homogeneous norms are equivalent it follows that the measures $\cH^s,
\cS^s$, $\cH^s_\G$ and $\cS^s_\G$ are all mutually absolutely
continuous with bounded Radon-Nikodym derivatives. 

\section{The critical dimension for $\dh$-removable sets}\label{sec:crit}

We first prove a representation theorem for Lipschitz harmonic
functions outside compact sets of finite $\ha^{Q-1}$ measure.

\begin{thm}\label{main}
Let $C$ be a compact subset of $\hn$ with  $\mathcal{H} ^{Q-1} (C) <
\infty$ and let $D\supset C$ be a domain in $\hn$. Suppose $f:D \ra
\R$ is a Lipschitz function such that  $\dh f = 0$ in $D \stm C$. 
Then there exist a bounded domain $G$, $C \subset G \subset D$, a
Borel function $h:C\ra \R$ and an $\dh$-harmonic function $H:G \ra \R$
such that
$$
f(p)=\int_C \G(q^{-1}\cdot p)h(q) \, d \ha^{Q-1}(q) + H(p) \ \text{for} \ p
\in G \stm C
$$
and $\|h\|_{L^\infty(\ha^{Q-1} \lfloor C)} +\|\anah H \|_\infty  \lesssim 1$.
\end{thm}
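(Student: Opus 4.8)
The plan is to mollify $f$ at scale $\e$, apply the representation formula \eqref{fundconv} to the mollified function, and then pass to the limit, isolating the contributions coming from a neighborhood of $C$. First I would fix a bounded domain $G$ with $C\subset G\Subset D$, choose a cutoff $\psi\in C_0^\infty(D)$ with $\psi\equiv 1$ on $\overline G$, and set $g=\psi f$, so that $g$ is a compactly supported Lipschitz function with $\dh g=0$ in a neighborhood of $C$ inside $G$ (more precisely on $G\stm C$, up to the region where $\psi$ is nonconstant, which lies outside $\overline G$). Writing $g_\e=g\ast \rho_\e$ for a standard homogeneous mollifier $\rho_\e$, the identity \eqref{fundconv} gives $g_\e = -\G\ast \dh g_\e$, and by Proposition \ref{panulip} the family $\{g_\e\}$ has uniformly bounded horizontal gradient, $\|\anah g_\e\|_\infty\lesssim \mathrm{Lip}_B(f)\lesssim 1$. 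The function $u_\e:=\dh g_\e$ is smooth, supported in a fixed compact set, and — this is the crucial point — supported in an $O(\e)$-neighborhood of $C$ together with the (harmless) annular region where $\psi$ varies; on the rest it vanishes because $\dh g=0$ there and mollification commutes with the left-invariant operator $\dh$.

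Next I would split $u_\e=u_\e^{(1)}+u_\e^{(2)}$ according to these two regions. The term $u_\e^{(2)}$ is supported away from $C$ where $f$ is genuinely $\dh$-harmonic; $\G\ast u_\e^{(2)}$ converges to an $\dh$-harmonic function $H$ on a neighborhood of $C$, with $\|\anah H\|_\infty\lesssim 1$ by the same gradient bound. For the term $u_\e^{(1)}$, supported in $\{q:\dist(q,C)<C\e\}$, I would estimate its total mass: integrating $\dh g_\e = \divh \anah g_\e$ over thin tubular pieces and using the sub-Riemannian divergence formula \eqref{valentino} from Remark \ref{valerem}, together with $\|\anah g_\e\|_\infty\lesssim 1$, bounds $\int |u_\e^{(1)}|$ by the perimeter of an $O(\e)$-neighborhood of $C$, which is $\lesssim \e^{-1}\cdot(\text{$Q$-measure of that neighborhood})\lesssim \cH^{Q-1}(C)\lesssim 1$ using the $(Q-1)$-finiteness of $C$ and a covering/area argument (this is exactly where the Euclidean-regularity and Sard-perturbation machinery advertised in the introduction is needed to make the divergence theorem applicable on the genuinely nonsmooth sets that arise). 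Thus the measures $h_\e\,\mathcal L^N := u_\e^{(1)}$ have uniformly bounded mass; a subsequence converges weakly-$*$ to a measure $\mu$ supported on $C$, and the $(2-Q)$-homogeneity and local integrability of $\G$ let me pass to the limit in $\G\ast u_\e^{(1)}$ to get $\int_C \G(q^{-1}\cdot p)\,d\mu(q)$.

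Finally I would identify $\mu$ with $h\,\cH^{Q-1}\lfloor C$ for a bounded Borel density $h$. The absolute continuity $\mu\ll \cH^{Q-1}\lfloor C$ and the bound $\|h\|_{L^\infty(\cH^{Q-1}\lfloor C)}\lesssim 1$ follow from a uniform upper density estimate: for each ball $B_\G(p,r)$ one tests the construction against a bump adapted to that ball and uses the gradient bound plus the divergence formula again to show $\mu(B_\G(p,r))\lesssim r^{Q-1}$, so that $\mu$ has bounded $(Q-1)$-density; then standard differentiation theory for measures on $\GG$ (Federer-type covering, valid since $(\GG,d)$ is doubling) yields the density $h$ with the claimed $L^\infty$ bound. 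Assembling $f=g=\G\ast u_\e \to \int_C\G(q^{-1}\cdot p)h(q)\,d\cH^{Q-1}(q)+H(p)$ on $G\stm C$ completes the proof.

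\medskip

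\emph{Main obstacle.} The hardest step is the mass bound $\int|u_\e^{(1)}|\lesssim 1$ and the density estimate $\mu(B_\G(p,r))\lesssim r^{Q-1}$: both require applying a divergence theorem on $\e$-neighborhoods of the arbitrary compact set $C$, which are not smooth. Handling this rigorously is precisely the reason one must pass through Euclidean finite-perimeter sets, perturb piecewise-smooth boundaries via Sard's theorem to control the Euclidean reduced boundary, and then convert Euclidean perimeter back to sub-Riemannian spherical Hausdorff measure via the area-type formulae of \cite{valentino1}, \cite{valentino2} — the interplay flagged in the introduction. Everything else (mollification, the homogeneity of $\G$, the weak-$*$ compactness, the differentiation argument) is routine once that estimate is in hand.
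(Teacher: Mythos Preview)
Your overall strategy---mollify, use $g_\e=-\Gamma\ast\cL g_\e$, split into a piece near $C$ and a harmonic remainder, extract a weak limit supported on $C$, then identify the density---is exactly the strategy of the paper. The gap is in the step you yourself flag as the main obstacle.

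The divergence theorem gives you
\[
\int_\Omega \cL g_\e \;=\; \int_{\partial\Omega}\langle\anah g_\e,\nu_\GG\rangle\,d\|\partial\Omega\|,
\]
which controls $\bigl|\int_\Omega u_\e^{(1)}\bigr|$ but \emph{not} $\int_\Omega |u_\e^{(1)}|$. There is no sign information on $\cL g_\e$, so slicing the $\e$-neighborhood into ``thin tubular pieces'' does not yield a total-variation bound. For the same reason, testing $\mu$ against a bump $\phi$ supported in $B(p,r)$ gives $|\langle\mu,\phi\rangle|=|\int\anah g\cdot\anah\phi|\lesssim r^{Q-1}$ only for that particular $\phi$; to get $|\mu|(B(p,r))\lesssim r^{Q-1}$ you would need the bound uniformly over all $\phi$ with $|\phi|\le 1$, and then $\anah\phi$ is uncontrolled. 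A second issue: your chain ``perimeter $\lesssim \e^{-1}|N_\e(C)|\lesssim \cH^{Q-1}(C)$'' presumes $|N_\e(C)|\lesssim \e$, i.e.\ finite $(Q-1)$-Minkowski content, which is strictly stronger than $\cH^{Q-1}(C)<\infty$.

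The paper avoids both problems by never trying to bound $\int|\cL g_m|$. Instead of the $\e$-neighborhood it takes $G_m=\bigcup_j U_\Gamma(p_{m,j},r_{m,j})$, a finite union of balls with $\sum_j r_{m,j}^{Q-1}\le \cS_\Gamma^{Q-1}(C)+1/m$ (this is where the Hausdorff hypothesis enters, and it is exactly what Minkowski content would fail to give). In the integral $I_m(p)=-\int_{G_m}\Gamma(q^{-1}p)\cL g_m(q)\,dq$ it performs one further integration by parts,
\[
\Gamma\,\cL g_m=\diver_{\GG,q}(\Gamma\,\anah g_m)-\langle\anah\Gamma,\anah g_m\rangle,
\]
so that $I_m$ becomes a boundary integral over $\partial G_m$ plus a term that vanishes because $|G_m|\le m^{-1}\sum r_{m,j}^{Q-1}\to 0$. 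The boundary term defines a signed measure
\[
\sigma_m=\Big\langle\anah g_m,\frac{\nu_m}{|\nu_m|}\Big\rangle\,b_m\,\cS^{Q-1}\lfloor\partial G_m,
\]
whose total variation is controlled \emph{pointwise} by $\|\anah g_m\|_\infty\,\cS^{Q-1}(\partial G_m)\lesssim\sum_j r_{m,j}^{Q-1}$---no sign information needed. The Sard perturbation and the Euclidean finite-perimeter/area-formula machinery are invoked on this explicit union of $d_\Gamma$-balls (to make the boundary overlap $\cH^{N-1}_E$-negligible and to pass from Euclidean perimeter to $\cS^{Q-1}$), not on an $\e$-neighborhood of $C$. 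Once $\|\sigma_m\|$ is uniformly bounded, weak-$\ast$ compactness and the measure-theoretic density argument proceed as you outline.
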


\begin{proof}
Let $D_1$ be a domain such that $C \subset D_1 \subset D$, $\bar{D_1}$
is compact and $\dist (\overline{D_1},\hn \stm D)>0$. For every
$m=1,2,\dots$ there exists a finite number of balls
$U_{m,j}:=U_\G(p_{m,j},r_{m,j})$, $j=1,\ldots,j_m$, such that
$U_{m,j}\cap C \neq \emptyset$,
\begin{equation}\label{gmmeas}
C \subset \bigcup_{j=1}^{j_m} U_{m,j} \subset D_1, \ \ r_{m,j} \leq
\frac{1}{m},
\end{equation}
and
\begin{equation}\label{gmmeas2}
\sum_{j=1}^{j_m} r_{m,j}^{Q-1} \leq \ss_\G ^{Q-1} (C)+ \frac{1}{m}.
\end{equation}

Temporarily fix $m \in \N$, and for simplicity let $p_j:=p_{m,j}$ and
$r_j:=r_{m,j}$. The boundary of the union of the balls, $\bigcup_j
U_\G(p_j,r_j)$, is contained in the union of the boundaries, and hence
has (Euclidean) dimension at most $N-1$. We want to show that the
overlap set
$$
\bigcup_{j\ne i} \partial U_\G(p_i,r_i) \cap \partial U_\G(p_j,r_j)
$$
is a null set for the Euclidean Hausdorff $(N-1)$-measure, in order to ensure that it is negligible for the classical divergence theorem. This follows from Sard's theorem, provided we adjust the radii slightly.

Since $C$ is compact and the balls $U_\G(p_j,r_j)$ are open, we have room to decrease the radii slightly while still covering $C$.

\begin{lm}\label{overlap-lemma}
Assume the centers $p_j$ are distinct, and fix intervals $J_j = [r_j-\epsilon,r_j]$ for some $\epsilon>0$. Then there exist values $r_j' \in J_j$ so that
$$
\dim_E \left( \bigcup_{i\ne j} \partial U_\G(p_i,r_i') \cap \partial U_\G(p_j,r_j') \right) \le N-2.
$$
Consequently, $\cup_{j\ne i} \partial U_\G(p_i,r_i') \cap \partial U_\G(p_i,r_i')$ is a null set for the measure $\cH^{N-1}_E$.
\end{lm}

Here $\dim_E$ refers to the dimension in the underlying Euclidean metric of $\R^N$.

\begin{proof}
It suffices to assume $j_m=2$. We wish to show that
$$
\dim_E ( \partial U_\G(p_1,r_1') \cap \partial U_\G(p_2,r_2') ) \le N-2
$$
for some $r_1' \in J_1$, $r_2' \in J_2$. Consider the map $F:\GG \to
\R^2$ given by
$$
F(p) = \left( d_\G(p,p_1) , d_\G(p,p_2) \right).
$$
Then $F$ is $C^\infty$, and $F^{-1}
(r_1',r_2')
 = \partial U_\G(p_1,r_1') \cap \partial U_\G(p_2,r_2')$ for $r_1',r_2'>0$. According to Sard's theorem \cite{sard}, the set of critical values of $F$ has measure zero in $\R^2$. Since $J_1\times J_2$ has positive measure, there exist $r_1' \in J_1$, $r_2' \in J_2$ so that $(r_1',r_2')$ is a regular value of $F$, i.e., $\rank DF(p) = 2$ for all $p \in F^{-1}(r_1',r_2')$. Moreover, the set $F^{-1}(r_1',r_2')$ is a smooth submanifold whose (Euclidean) dimension is at most $\dim_E \GG - \dim_E \R^2 = N-2$.
\end{proof}

The balls $U_\G(p_j,r_j')$ continue to cover $C$ and satisfy
\eqref{gmmeas} and \eqref{gmmeas2}. In view of the above, we can assume without loss of generality that the conclusion of the lemma holds for the original balls $U_\G(p_j,r_j)$ (i.e., we relabel $r_j'$ as $r_j$).

The Dimension Comparison Theorem in Carnot groups (see Theorem 2.4 and
Proposition 3.1 in \cite{btw}), in codimension one, implies that the
spherical Hausdorff measure $\cS^{Q-1}_\Gamma$ constructed from the
metric $d_\Gamma$ for a fixed homogeneous distance $\Gamma$ is bounded
above (up to a constant) by the Euclidean measure $\cH^{N-1}_E$. It
follows from this and Lemma~\ref{overlap-lemma} that the overlap set
is also a null set for the spherical Hausdorff measure $\cS^{Q-1}_\G$. 

Let $G_m = \cup_{j=1}^{j_m} U_{m,j}$ and $$0<\e_m <\min\{1,\dist(C,\hn
\stm G_m),\dist(G_m,\hn \stm D_1) \}.$$ 
By the Whitney-McShane Extension Lemma there exists a Lipschitz
function $F:\hn \ra \R$ such that $F|_{D}=f$ and $F$ is bounded.

If $d_0=1+\max_{z\in\overline{D_1}}d(z,0)$, then the condition $d(y,0)+d(z,0) \leq d_0$ gives
\begin{equation}
\label{homodist}
d(y^{-1} \cdot z, z) \leq c(d_0)d(y,0)^{1/s},
\end{equation}
due to \cite[3.18]{valentinoaus}.
Let $\Phi \in C^\infty_0 (\R^{N})$, $\Phi \geq 0$, such that $\spt
\Phi \subset U(0,1)$ and $\int \Phi =1$. For any $\delta>0$ let
$\Phi_\delta(x)=\delta^{-Q} \Phi(\delta_{1/\delta}(x))$. We consider the sequence
of mollifiers
\begin{equation}\label{mol}
\begin{split}
f_m(x):=F \ast \Phi_{\delta_m}(x)=\int F(y) \Phi_{\delta_m}(x \cdot y^{-1})\,dy
=\int_{U(0,\delta_m)} F(y^{-1}\cdot x) \Phi_{\delta_m}(y)\,dy
\end{split}
\end{equation}
for $x \in \hn$ and $\delta_m=(\frac{\e_m}{2c(d_0)})^s$. Since $F$ is bounded and uniformly continuous,
$$
\|f_m-F\|_\infty \ra 0
$$ 
on compact sets of $\GG$. Furthermore for all $m \in \N$, we have that
\begin{enumerate}
\item $f_m \in C^\infty$,
\item $\|\anah f_m\|_\infty \leq \|\anah F\|_\infty < \infty$.
\end{enumerate}
For $\delta>0$ and $S \subset \GG$ let
\begin{equation*}
\label{neig}L(S,\delta) = \{ p \in S \, : \, \dist(p,S^c) > \delta \}.
\end{equation*} 
If $x \in L(D_1 \stm C, \e_m)$, $y \in B(0,\delta_m)$ and $z \in C$, by (\ref{homodist}) we obtain
\begin{equation*}
\begin{split}
d(y^{-1} \cdot x, z)&\geq d(x,z)-d(y^{-1}\cdot x, x)>\e_m -c(d_0)d(y,0)^{1/s}>0.
\end{split}
\end{equation*}
In particular $y^{-1} \cdot x \notin C$ and in the same way $y^{-1} \cdot x \notin \GG \setminus D_1$. Therefore every mollifier $f_m$ is harmonic in  $D_{\e_m}$. We continue by choosing another domain $D_2$ such
that $G_m \subset D_2 \subset L (D_1,\e_m )$ for all $m=1,2,\dots$, and an
auxiliary function $\varphi \in C_0^\infty(\R^{N})$ such that 
$$ 
\varphi  = \begin{cases}
       1 & \text{in} \ D_2\\
       0 & \text{in} \ \hn \stm \overline{D}_1.\\
     \end{cases}
$$
For $m=1,2,\dots$ set $g_m:= \varphi f_m$ and notice that $g_m \in
C_0^\infty(\R^{N})$ and  
$$\|\anah g_m\|_\infty \leq A_1$$
where $A_1$ does not depend on $m$. It follows by (\ref{fundconv})
that for all $m \in \N$,
\begin{equation}\label{convgm}
-g_m(p)=\G \ast \dh g_m (p)\ \text{for all} \  p \in \hn.
\end{equation}
Notice that
\begin{enumerate}
\item $g_m=0$ in $\hn \stm \overline{D_1}$,
\item $g_m=f_m$ in $D_2 \stm G_m$ and hence $\dh g_m=\dh f_m=0$ in $D_2 \stm G_m$.
\end{enumerate}
Therefore for all $m \in \N$ and $p \in D_2 \stm G_m$,
\begin{equation}\label{finconv}
-f_m (p)=\int_{G_m} \G (q^{-1} \cdot p) \dh g_m (q) \, dq +
\int_{\overline{D_1} \stm D_2} \G(q^{-1} \cdot p)  \dh g_m (q) \, dq 
\end{equation}
by (\ref{convgm}). For $m \in \N$ set $H_m: D_2 \ra \R$ to be
\begin{equation}\label{hfun}
H_m(p)= -\int_{\overline{D_1} \stm D_2} \G(q^{-1} \cdot p)  \dh g_m
(q) \, dq
\end{equation}
and $I_m:D_2 \stm G_m \ra \R,\ m=1,2,\dots$ to be
\begin{equation}\label{imfun}
I_m (p)= -\int_{G_m} \G (q^{-1} \cdot p) \dh g_m (q) \, dq.
\end{equation}
Since the functions $\dh g_m$ are uniformly bounded in $\overline{D_1}
\stm D_2$, for all $m \in \N$   
\begin{enumerate}
\item $H_m$ is harmonic in $D_2$,
\item $\|\anah H_m\|_\infty \lesssim 1$, since $\anah \G$ is locally integrable.
\end{enumerate}
The functions $H_m$ are $C^{\infty}$ by H\"ormander's theorem, see for
example Theorem 1 in Preface of \cite{blu}. Thus we can apply
Proposition \ref{panulip} and conclude from (ii) that
$\text{Lip}_{\text{B}} (H_m) \lesssim 1$.

The functions $I_m$ can be expressed as
\begin{equation}\label{im1}
I_m (p)=-\int_{G_m} \di_{\GG,q} (\G (q^{-1} \cdot p) \anah g_m
(q)) \, dq + \int_{G_m}\langle \anah \G (p^{-1} \cdot q), \anah g_m
(q) \rangle \, dq, 
\end{equation}
where $\di_{\GG,q}$ stands for the $\GG$-divergence with respect to
the variable $q$ and we also used the left invariance of $\anah$ and
the symmetry of $\G$ to get that $$\nabla_{\GG,q}(\G(q^{-1} \cdot
p))=\nabla_{\GG,q}(\G(p^{-1} \cdot q))=\anah \G (p^{-1} \cdot q).$$ 

By (\ref{valentino}) one has the identity
\begin{equation*}
\int_\Omega \diver_\GG F = \int_{\cF^*\Omega} \langle F,\nu_\GG\rangle \, d\|\der\Omega\|
\end{equation*}
for every $C^1$ horizontal vector field $F$ and bounded $C^1$ smooth domain
$\Omega$, where $\nu_\GG$ denotes the non-normalized horizontal normal
introduced in Remark~\ref{valerem}. For instance, we may take $\Omega
= U_\G(p_{j,m},r_{j,m})$ as above for each $m$ and $j$. In fact, in
this case Lemma \ref{overlap-lemma} implies that the overlap of the
boundaries is a null set for the $\cH^{N-1}_E$ measure, whence  
\[
\cF^*\Omega=\der\Omega\stm N\,,
\]
where $\cH^{N-1}_E(N)=0$ and the generalized outer normal $\nu$
coincides with the classical outer normal of $\Omega$ at smooth points
of $\der\Omega$. Since the restriction of the perimeter measure to the
reduced boundary is the $(N-1)$-dimensional Hausdorff measure, it
follows that
\begin{equation}\label{valentino-identity}
\int_\Omega \diver_\GG F = \int_{\der\Omega} \langle F,\nu_\GG\rangle
\, d\cH^{N-1}_E\,.
\end{equation}
Next we want to show that the identity
\begin{equation}\label{identity-of-measures}
 |\nu_\GG|\, \cH^{N-1}_E\restrict \partial\Omega=\alpha\,\cS^{Q-1}
 \restrict \partial\Omega 
\end{equation}
holds for such piecewise $C^1$ domains $\Omega$, where $\alpha$ is a
Borel function on $\der\Omega$ and $\cS^{Q-1}_\Gamma$ is the spherical
Hausdorff measure with respect to a fixed homogeneous distance
$d_\Gamma$. Since $\der\Omega$ is not $C^1$, we cannot apply directly
the area formula of \cite{valentino2} to represent the spherical
Hausdorff measure, as that formula is restricted to $C^1$ 
domains and arbitrary auxiliary Riemannian metrics.


We proceed as follows. The 
overlap of the boundaries is $\cH_E^{N-1}$ negligible, hence it also
$\cS^{Q-1}_\Gamma$ negligible due to Proposition~3.1 of \cite{btw}.
The restriction of \eqref{identity-of-measures} to the smooth parts of
$\der\Omega$ where $\nu_\GG$ vanishes easily follows from the
$Q-1$-dimensional negligibility of characteristic points, see 
\cite{valentino}. In fact, these points are characterized by the
vanishing of the horizontal normal $\nu_\GG$. Finally, we consider the
Borel subset $B_0$ of $\der\Omega$ that does not intersect both the
overlap set and the characteristic set. From the measure theoretic
area formula of \cite{valentino1}, joined with the blow-up theorem at
non-characteristic points \cite{valentino2}, we obtain 
\[
 |\nu_\GG|\,\cH^{N-1}_E\restrict B_0= \alpha\, \cS^{Q-1}_\Gamma
\]
for some Borel function $\alpha$ defined on $B_0$, that extends by
zero at points of $\der\Omega\stm B_0$. 
Consequently, \eqref{valentino-identity} implies the identity
\begin{equation}\label{valentino-identity-2}
\int_\Omega \diver_\GG F = \int_{\partial\Omega} 
\left\langle F,\frac{\nu_\GG}{|\nu_\GG|}\right\rangle \,\alpha\,
d\cS^{Q-1}_\Gamma
\end{equation}
for every $C^1$ horizontal vector field $F$ and bounded piecewise
$C^1$ domain $\Omega$ for which the overlap set is a null set for the
boundary measure. The integrand in \eqref{valentino-identity-2} is
undefined on the characteristic set, but this is irrelevant since it
is a null set for the measure. The important fact is that, from
Theorem~5.4 of \cite{valentino2}, we obtain two geometric constants
$c_1,c_2>0$, independent of $\Omega$, such that $c_1\le \alpha\le c_2$
at $\cS^{Q-1}_\Gamma$ a.e. point of $B_0$. Switching from a general
homogeneous distance to the Carnot-Carath\'eodory distance $d$ and
corresponding spherical Hausdorff measure $\cS^{Q-1}$, we obtain   
\begin{equation}\label{divfss}
\begin{split}
\int_{G_m} \di_{\GG,q} \Big(\G (q^{-1} \cdot p) &\anah g_m (q)\Big) \, dq \\
&= \int _{\partial G_m} \G (q^{-1} \cdot p) \Big\langle \anah g_m(q),
\frac{\nu_m (q)}{|\nu_m(q)|} \Big\rangle\, b_m(q) \, d \ss^{Q-1}(q),
\end{split}
\end{equation}
for some $b_m \in L^{\infty}(\ss^{Q-1}\restrict\der G_m)$, where
$\nu_m$ is the non-normalized horizontal normal of $G_m$ and $c_1\le
b_m\le c_2$ at $\cS^{Q-1}$-a.e. point of $\der G_m$ and for every $m$.
(Note that the Radon--Nikodym derivative of $\cS^{Q-1}_\Gamma$ with
respect to $\cS^{Q-1}$, which is bounded away from zero and infinity,
is included in the weight function $b_m$.)

By \eqref{gmmeas2}, $|G_m|\ra 0$, therefore for $p \in D_2 \stm C$,
\begin{equation}\label{zerolim}
\lim_{m \ra \infty}\left| \int_{G_m}\langle \anah \G (p^{-1} \cdot q),
  \anah g_m (q) \rangle \, dq \right| \ra 0,
\end{equation}
since $|\anah g_m|$ is uniformly bounded in $D_2$ and $\anah \G$ is
locally integrable. 

Notice that the signed measures,
\begin{equation}\label{sigmes}
\sigma_m=\Big\langle \anah g_m(\cdot), \frac{\nu_m
  (\cdot)}{|\nu_m(\cdot)|} \Big\rangle\, b_m\,  \ss^{Q-1}
\lfloor \partial G_m,
\end{equation}
have uniformly bounded total variations $\|\sigma_m\|.$
This follows by \eqref{gmmeas2}, as
\begin{equation}\label{smvar}
\begin{split}
\|\sigma_m \| &\leq \| \anah g_m\|_\infty \| b_m
\|_{L^\infty(\ss^{Q-1})}\ss^{Q-1}(\partial G_m) \\  
&\lesssim \sum_j \ss_\G^{Q-1} (\partial U_{m,j})\lesssim \sum_j
r_{m,j}^{Q-1} \\ 
&\lesssim \ss_\G^{Q-1}(C) + \frac{1}{m}.
\end{split}
\end{equation}
Therefore, by a general compactness theorem, see e.g.\ \cite{afp}, we
may extract a weakly converging subsequence $(\sigma_{m_k})_{k \in\N}$
such that $\sigma_{m_k}\ra \sigma$. Furthermore $\spt
\sigma:=\spt|\sigma| \subset C$ and by (\ref{smvar})
\begin{equation}
\label{sigvar}
\|\sigma\| \leq \liminf_{k \ra \infty} \|\sigma_{m_k}\| \lesssim \ss^{Q-1} (C).
\end{equation}

Finally combining (\ref{im1})---(\ref{sigmes}) we get that for $p \in D_2 \stm C$, 
$$
\lim_{k \ra \infty} I_{m_k} (p)= \int_C \G(q^{-1} \cdot p) \, d \sigma(q)
$$
and by (\ref{finconv})---(\ref{imfun})
$$
f(p)=\int_C \G(q^{-1} \cdot p)\,d\sigma(q)+\lim_{k\ra\infty}H_{m_k}(p).
$$
Since the sequence of harmonic functions $(H_{m_k})$ is equicontinuous
on compact subsets of $D_2$, the Arzel\`a-Ascoli theorem implies that
there exists a subsequence $(H_{m_{k_l}})$ which converges uniformly
on compact subsets of $D_2$. From the Mean Value Theorem for
sub-Laplacians and its converse, see \cite{blu}, Theorems 5.5.4 and
5.6.3, we deduce that $(H_{m_{k_l}})$ converges to a function $H$
which is harmonic in $D_2$. Therefore for $p \in D_2 \stm C$,
$$
f(p)=\int_C \G(q^{-1} \cdot p) \, d \sigma q+H (p).
$$
Furthermore the function $H$ is $C^{\infty}$ in $D_2$ with
$\text{Lip}_{\text{B}}(H) \lesssim 1$, therefore by Proposition
\ref{panulip}
$$\|\anah H\|_\infty \lesssim 1.$$
In order to complete the proof it suffices to show that 
\begin{equation}
\label{abscont}
\sigma  \ll \mu \ \text{and} \ h:=\frac{d \sigma}{d \mu} \in L^{\infty} (\mu),
\end{equation}
where $\mu= \ss^{Q-1} \lfloor C$.
The measure-theoretic proof of (\ref{abscont}) can be found in \cite{chousionis-mattila}.
\end{proof}
\begin{lm}
\label{stanfund}
For $p_1,p_2 \neq q \in \hn$
$$
|\G (q^{-1} \cdot p_1)-\G (q^{-1} \cdot p_2)| \lesssim  d(p_1,p_2) (d(q,p_1)^{1-Q}+d(q,p_2)^{1-Q}).
$$ 
\end{lm}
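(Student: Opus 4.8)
The plan is to estimate the difference $\G(q^{-1}\cdot p_1)-\G(q^{-1}\cdot p_2)$ by integrating the $\h$-gradient of $\G$ along a horizontal path joining $q^{-1}\cdot p_1$ to $q^{-1}\cdot p_2$, and then to control the resulting integral using the size estimate \eqref{sizeest} for $k=\anah\G$ together with the $(1-Q)$-homogeneity of $k$. First I would reduce to the case $q=0$ by left translation: since $\G$ is left-translation invariant in the sense that it depends only on the group element, writing $a=q^{-1}\cdot p_1$ and $b=q^{-1}\cdot p_2$ we have $d(q,p_i)=d(0,q^{-1}\cdot p_i)=\|q^{-1}\cdot p_i\|_{cc}$ and $d(p_1,p_2)=d(a,b)$, so the claim becomes
$$
|\G(a)-\G(b)|\lesssim d(a,b)\,\big(\|a\|_{cc}^{1-Q}+\|b\|_{cc}^{1-Q}\big)
$$
for $a,b\neq 0$.

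Next I would choose a sub-unit curve $\gamma:[0,T]\ra\h$ from $a$ to $b$ with $T=d(a,b)$ (this exists by Chow's theorem and the definition of the Carnot--Carath\'eodory distance), and write, using the chain rule for horizontal curves and Definition \ref{subunit},
$$
\G(b)-\G(a)=\int_0^T \frac{d}{dt}\G(\gamma(t))\,dt=\int_0^T \langle \anah\G(\gamma(t)),a(t)\rangle\,dt,
$$
where $a(t)=(a_1(t),\dots,a_m(t))$ satisfies $\sum_j a_j(t)^2\le 1$; hence by Cauchy--Schwarz $|\G(b)-\G(a)|\le \int_0^T |\anah\G(\gamma(t))|\,dt$. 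By \eqref{sizeest} this is bounded by a constant times $\int_0^T \|\gamma(t)\|_\G^{1-Q}\,dt$, and since all homogeneous norms are equivalent by \eqref{companorms}, also by $\int_0^T \|\gamma(t)\|_{cc}^{1-Q}\,dt$. The heart of the argument is then the pointwise lower bound $\|\gamma(t)\|_{cc}\gtrsim \min\{\|a\|_{cc},\|b\|_{cc}\}$ whenever $T=d(a,b)$: this is a standard fact but needs a short argument. If $d(a,b)\le \tfrac12\min\{\|a\|_{cc},\|b\|_{cc}\}$, then for every $t$ the triangle inequality gives $\|\gamma(t)\|_{cc}\ge \|a\|_{cc}-d(a,\gamma(t))\ge \|a\|_{cc}-d(a,b)\ge \tfrac12\|a\|_{cc}$ (and symmetrically in $b$), so $\|\gamma(t)\|_{cc}^{1-Q}\lesssim \|a\|_{cc}^{1-Q}\le \|a\|_{cc}^{1-Q}+\|b\|_{cc}^{1-Q}$, and integrating over an interval of length $d(a,b)$ yields exactly the asserted bound. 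In the complementary range $d(a,b)> \tfrac12\min\{\|a\|_{cc},\|b\|_{cc}\}$ one bounds each term separately: $|\G(a)-\G(b)|\le \G(a)+\G(b)$, and by $(2-Q)$-homogeneity $\G(a)=\|a\|_\G^{2-Q}\lesssim \|a\|_{cc}^{2-Q}\lesssim d(a,b)\,\|a\|_{cc}^{1-Q}$, using $\|a\|_{cc}\lesssim d(a,b)$; similarly for $b$; summing gives the claim.

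The main obstacle I anticipate is the pointwise estimate on $\|\gamma(t)\|_{cc}$ along the geodesic-like curve in the ``near'' regime, namely justifying that the length-minimizing (or nearly minimizing) sub-unit curve cannot stray closer to the origin than a fixed fraction of $\min\{\|a\|_{cc},\|b\|_{cc}\}$; as sketched above this reduces to two applications of the triangle inequality once the regime $d(a,b)\le\tfrac12\min\{\|a\|_{cc},\|b\|_{cc}\}$ is isolated, so the only real care is in splitting into the two regimes and handling the ``far'' regime by brute-force homogeneity bounds. Everything else — the reduction to $q=0$, the fundamental theorem of calculus along a horizontal curve, Cauchy--Schwarz, and \eqref{sizeest} — is routine.
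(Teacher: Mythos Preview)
Your proposal is correct and matches the paper's own proof essentially line for line: the paper assumes without loss of generality $d(p_1,q)\le d(p_2,q)$, splits into the same two regimes $d(p_1,p_2)\gtrless\tfrac12 d(p_1,q)$, handles the far case by the crude bound $\G(a)+\G(b)\lesssim d(p_1,q)^{2-Q}\lesssim d(p_1,p_2)\,d(p_1,q)^{1-Q}$, and in the near case integrates $|\anah\G|$ along a sub-unit curve of length $d(p_1,p_2)$, using the triangle inequality exactly as you do to obtain $\|\gamma(t)\|_{cc}\ge\tfrac12 d(p_1,q)$. One tiny caveat: in your far regime the phrase ``similarly for $b$'' is not quite symmetric (only the \emph{smaller} of $\|a\|_{cc},\|b\|_{cc}$ is controlled by $d(a,b)$), but since $\|b\|_{cc}^{2-Q}\le\|a\|_{cc}^{2-Q}$ when $\|a\|_{cc}\le\|b\|_{cc}$, both terms are absorbed by $d(a,b)\,\|a\|_{cc}^{1-Q}$, which is precisely how the paper handles it.
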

\begin{proof}
Let $p_1,p_2 \neq q \in \hn$. Without loss of generality assume that $d(p_1,q)\leq d(p_2,q)$. We are going to consider two cases.

\

\paragraph{\it Case I} $d(p_1,p_2)\geq \frac{1}{2} d (p_1,q)$. In
this case, since $d_\G$ is globally equivalent to $d$ we have
\begin{equation*}
\begin{split}
|\G (q^{-1} \cdot p_1)-\G (q^{-1} \cdot p_2)| & \lesssim \frac{1}{d(p_1,q)^{Q-2}}+\frac{1}{d(p_2,q)^{Q-2}}\\
\lesssim \frac{ 1}{d(p_1,q)^{Q-2}}&\lesssim \frac{d(p_1,p_2)}{d(p_1,q)^{Q-1}}
\end{split}
\end{equation*}

\paragraph{\it Case II} $d(p_1,p_2)< \frac{1}{2} d (p_1,q)$. In this
case, by the definition of the Carnot-Carath\'eodory metric there
exists a sub-unit curve $\g:[0,d(p_1,p_2)]\ra \hn$ such that $\g
(0)=q^{-1}\cdot p_1$ and $\g (d(p_1,p_2))=q^{-1} \cdot p_2$.
Furthermore, 
\begin{equation}
\label{refe}\g([0,d(p_1,p_2)]) \subset B(q^{-1}\cdot p_1, d (p_1,p_2)).
\end{equation}
Hence for every $t \in [0,d(p_1,p_2)]$
\begin{equation}\label{gammacomp}
\begin{split}
\|\g(t)\|\gtrsim d(0,\g(t)) &\geq d (0, q^{-1}\cdot p_1)-d(\g(t),q^{-1}\cdot p_1) \\
&\geq d(q,p_1)-d(p_1,p_2)\geq \frac{1}{2} d(q,p_1)
\end{split}
\end{equation}
since $d(\g(t),q^{-1}\cdot p_1) \leq d(p_1,p_2)$ by (\ref{refe}).
Therefore, with $T:=d(p_1,p_2)$ we have
\begin{equation*}
\begin{split}
|\G (q^{-1} \cdot p_1)&-\G (q^{-1} \cdot p_2)|=|\G (\g (0))-\G (\g
(T))|=\left| \int_0^T  \frac{d}{dt} (\G (\g(t)) \, dt \right| \\ 
&\leq \int_0^T\left(\sum_{j=1}^m (X_j \G(\g(t)))^2 \right)^\frac{1}{2}
\, dt = \int_0^T|\anah \G (\g(t))| \, dt \\
&\lesssim \int_0^T \frac{dt}{\|\g(t)\|^{Q-1}} \lesssim
\frac{d(p_1,p_2)}{d(p_1,q)^{Q-1}}
\end{split}
\end{equation*}
where we used (\ref{sizeest}) and (\ref{gammacomp}) respectively.
\end{proof}

We are now able to prove Theorem \ref{pos} which as discussed earlier
is also valid for Lipschitz harmonic functions in $\Rn$, with $Q$
replaced by $n$.

\begin{proof}[Proof of Theorem \ref{pos}]
The first statement follows from Theorem \ref{main}. To see this let
$D \supset C$ be a subdomain of $\hn$. Applying Theorem \ref{main} and
recalling that $C$ is a null set for the measure $\cH^{Q-1}$, we
deduce that if $f:D \ra \R$ is Lipschitz in $D$ and $\dh$-harmonic in
$D \stm C$, then there exists an $\dh$-harmonic function $H$ in a
domain $G$, $C\subset G \subset D$, such that
$$
f(p)=H(p) \ \text{for} \ p\in G\stm C.
$$
This implies that $f=H$ in $G$. Hence $f$ is harmonic in $G$, and so
also in $D$. Therefore $C$ is removable.

In order to prove (ii) let $Q-1<s<\dim C$. By Frostman's lemma in
compact metric spaces, see \cite{M}, there exists a nonvanishing Borel measure
$\mu$ with $\spt \mu \subset C$ such that 
$$\mu (B(p,r))\leq r^s \ \text{for} \ p \in \hn, r>0.$$
We define $f:\hn \ra \R^+$ as
$$f(p)= \int \G(q^{-1}\cdot p) d \mu (q).$$
It follows that $f$ is a nonconstant function which is $C^\infty$ in $\hn \stm C$ and $$\dh f=0 \ \text{on} \ \hn \stm C.$$
Furthermore $f$ is Lipschitz. Indeed, for $p_1,p_2 \in \hn$ we may use
Lemma \ref{stanfund} to obtain
\begin{equation*}
\begin{split}
|f(p_1)-f(p_2)|&=\left| \int \G(q^{-1}\cdot p_1) \, d \mu(q) -\int
  \G(q^{-1}\cdot p_2) \, d \mu(q) \right| \\ 
&\lesssim d(p_1,p_2)\left(\int \frac{1}{d(p_1,q)^{Q-1}} \, d\mu(q) +
  \int \frac{1}{d(p_2,q)^{Q-1}} \, d\mu(q) \right) \\ 
&\lesssim d(p_1,p_2).
\end{split}
\end{equation*}
To prove the last inequality let $p \in \hn$, and consider two cases.
If $\dist(p,C)>\diam(C)$,
$$ 
\int \frac{1}{d(p,q)^{Q-1}} \, d \mu(q) \le \frac{
  \mu(C)}{\diam(C)^{Q-1}} \lesssim 1.
$$
If $\dist(p,C)\leq \diam(C)$, then $C \subset B(p,2\diam(C))$. Let
$A=2\diam(C)$, then
\begin{equation*}
\begin{split}
\int \frac{1}{d(p,q)^{Q-1}} \, d \mu(q) & \le \sum_{j=0}^\infty
\int_{B(p,2^{-j}A)\stm B(p,2^{-(j+1)}A)} \frac{d \mu(q)}{d(p,q)^{Q-1}} \\  
& \le \sum_{j=0}^\infty \frac{\mu(B(p,2^{-j}A))}{(2^{-(j+1)}A)^{Q-1}}\\
&\le 2^{Q-1}A^{s-(Q-1)}\sum_{j=0}^\infty (2^{s-(Q-1)})^{-j} \\ &\lesssim 1.
\end{split}
\end{equation*}
Assume, by way of contradiction, that $f$ is $\dh$-harmonic on $\GG$. Since $f \geq 0$, by a Liouville-type theorem for sub-Laplacians, see e.g.\ Theorem 5.8.1 of \cite{blu}, we deduce that $f$ is constant. Hence we have reached a contradiction and consequently $C$ is not removable. 
\end{proof}

\section{Removable sets with positive and finite $\mathcal{H}^{Q-1}$
  measure}\label{sec:rem}

In this section we shall construct a self-similar Cantor set $K$ in
$\GG$ which is $\cL$-removable despite having positive $\cH^{Q-1}$
measure. As noted earlier our proof is rather different than the one
in \cite{chousionis-mattila}. Nevertheless, note that in Theorem
\ref{th-example} there is also one piece $S_0(K)$ of $K$ which is well
separated from the others. This fact allows for a straightforward
application of the condition in Theorem~\ref{unb}.

We let $\Sph := \{ p \in
\GG \, : \, ||p||_{cc} = 1 \}$ be the unit sphere centered at the
origin in this norm. The norm $||\cdot||_{cc}$ is comparable to any
other homogeneous norm on $\GG$, in particular, to the homogeneous norm
$$
|||p|||:=|p_1|+|p_2|^{1/2}+\cdots+|p_s|^{1/s}, \qquad p =
(p_1,p_2,\ldots,p_s).
$$

\begin{definition}
For a set $A \subset \Sph$, we define the {\it dilation cone} over $A$
to be the set
$$
\widehat{A} := \{ \delta_r(p) \, : \, r>0, p \in A \}.
$$
\end{definition}

We will prove the following theorem.

\begin{theorem}\label{th-example}
Let $U \subset \Sph$ be a nonempty open set. There exists a
self-similar iterated function system $\cF = \{ S_i : i=0,1,\ldots,M
\}$ with invariant set $K$ such that the following conditions are
satisfied:
\begin{itemize}
\item[(i)] the map $S_0$ has fixed point $0$,
\item[(ii)] $K \subset \widehat{U}$,
\item[(iii)] the pieces $S_0(K),\ldots,S_M(K)$ are pairwise disjoint, and
\item[(iv)] $0<\cH^{Q-1}(K)<\infty$.
\end{itemize}
\end{theorem}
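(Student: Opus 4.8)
The plan is to build the iterated function system explicitly out of contractive similarities of the form $S_i(p) = q_i \cdot \delta_{r_i}(p)$, where $q_i \in \GG$ and $0 < r_i < 1$ are chosen so that all four requirements hold simultaneously. Such maps are indeed similarities of $(\GG,d)$: since $d$ is left invariant and $\delta_{r_i}$ scales $d$ by $r_i$, we have $d(S_i(p),S_i(p')) = r_i\, d(p,p')$. The guiding idea is that a self-similar set generated by $N+1$ maps with ratios $r_i$ satisfying $\sum_i r_i^{Q-1} = 1$ and the open set condition has Hausdorff dimension exactly $Q-1$ with positive and finite $\cH^{Q-1}$ measure; this is the standard Moran--Hutchinson theory, which holds verbatim in any complete metric space equipped with a Moran-type covering structure, and in particular in Ahlfors $Q$-regular Carnot groups. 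So the real work is arranging (i)--(iii) while keeping the ratio sum equal to $1$.

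First I would fix a point $w \in U$ and a small radius $\rho > 0$ so that the closed ball $B(w,\rho)$, taken in the spherical-norm geometry of $\Sph$, lies in $U$; then the truncated cone $\{\delta_t(p) : p \in B(w,\rho),\ t \in [a,b]\}$ for suitable $0 < a < b$ is a ``brick'' sitting inside $\widehat U$, away from the origin. Inside this brick I would place $M$ pairwise disjoint small closed balls $B(q_i, s_i)$, $i = 1,\dots,M$, and let $S_i(p) = q_i \cdot \delta_{r_i}(p)$ with $r_i$ small enough that $S_i(\overline{B(0,R)}) \subset B(q_i,s_i)$, where $R$ is a fixed radius with $K \subset \overline{B(0,R)}$ once $K$ exists. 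For the distinguished map $S_0$ I want fixed point $0$, so I simply take $S_0 = \delta_{r_0}$ with $0 < r_0 < 1$; its image $S_0(\overline{B(0,R)}) = \overline{B(0,r_0 R)}$ is a ball around the origin, which I keep disjoint from the bricks by choosing the bricks bounded away from $0$ (i.e. $a$ large relative to $r_0 R$) — this is where (iii) forces a compatibility constraint between $r_0$ and the location of the cone piece. One must also check $S_0(K) \subset \widehat U$: since $K \subset \widehat U$ and $\widehat U$ is dilation-invariant, $\delta_{r_0}(K) \subset \widehat U$ automatically, so (ii) is consistent with (i). The key normalization is then to choose the ratios so that $r_0^{Q-1} + \sum_{i=1}^M r_i^{Q-1} = 1$; given any admissible geometric placement this is achieved by first picking $M$ large, scaling the $r_i$ down uniformly (which only helps disjointness), and solving for the missing mass — concretely, pick $r_1 = \dots = r_M =: r$ small and set $r_0 = (1 - M r^{Q-1})^{1/(Q-1)}$, valid as long as $M r^{Q-1} < 1$, and shrink $r$ further if needed so that $r_0 < 1$ and $S_0$'s image stays disjoint from the bricks.

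With the IFS fixed, the invariant set $K = \bigcup_i S_i(K)$ exists and is compact by Hutchinson's theorem in the complete metric space $(\GG,d)$. Property (i) holds by construction of $S_0$; property (iii) holds because the images $S_i(K) \subset B(q_i,s_i)$ and $S_0(K) \subset \overline{B(0,r_0R)}$ were placed in pairwise disjoint balls; property (ii) follows because each $S_i$ maps the brick-contained ball $B(q_i,s_i) \subset \widehat U$ into itself and $S_0$ preserves $\widehat U$ by dilation-invariance — a short induction shows $K \subset \widehat U$. Finally, (iv): the open set condition holds with the obvious open set (a slight enlargement of the union of the placement balls together with a ball around $0$, all made pairwise disjoint), and since $(\GG,d)$ is Ahlfors $Q$-regular with respect to Haar measure, the Moran--Hutchinson theorem gives $\dim_H K = d^*$ where $\sum_i r_i^{d^*} = 1$, hence $d^* = Q-1$, together with $0 < \cH^{Q-1}(K) < \infty$. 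The main obstacle, and the step requiring the most care, is the simultaneous bookkeeping in the second paragraph: guaranteeing that the $M+1$ image balls are genuinely disjoint, that they all lie in $\widehat U$ (this is a geometric constraint on $U$, handled by shrinking to a ball inside $U$ and using that the cone over a ball is open), that $S_0$'s image around the origin does not collide with the off-origin bricks, and that after all these shrinkings one can still solve $\sum_i r_i^{Q-1} = 1$ — but since shrinking the $r_i$ only relaxes the disjointness constraints while $r_0$ absorbs the remaining mass, there is enough freedom to satisfy everything at once.
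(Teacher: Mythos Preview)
Your overall architecture is sound and is in fact conceptually simpler than the paper's, but there is a genuine gap in the parameter bookkeeping that you gloss over with the sentence ``shrink $r$ further if needed so that $r_0 < 1$ and $S_0$'s image stays disjoint from the bricks.'' This is backwards. With $M$ fixed and $r_0 = (1 - M r^{Q-1})^{1/(Q-1)}$, shrinking $r$ drives $r_0 \to 1$, hence $S_0(\overline{B(0,R)}) = \overline{B(0,r_0 R)}$ swells to nearly all of $\overline{B(0,R)}$ and necessarily meets the bricks $B(q_i,s_i)$ sitting inside it. The disjointness constraint you need is roughly $r_0 + r < a/R$ (with $a$ the inner radius of the brick), and this forces $M r^{Q-1}$ to stay \emph{bounded away from zero} as $r \to 0$. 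So $M$ must grow like $r^{1-Q}$; you cannot fix $M$ first and then freely shrink $r$. One must also verify that $\sim r^{1-Q}$ disjoint balls of radius $\sim r R$ actually fit in the brick, which is a separate (though easy) packing estimate.

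For comparison, the paper resolves exactly this tension by building an auxiliary IFS $\cF' = \{S_1,\ldots,S_M\}$ whose fixed points lie in a $(Q-1)$--dimensional coset $\W_a = a * \W$ of a vertical subgroup. Choosing the $p_i$ as a maximal $\epsilon$--separated set in $B \cap \W_a$ and using the Ahlfors $(Q-1)$--regularity of $\W_a$ yields $M \asymp \epsilon^{1-Q}$ automatically; with $r \asymp \epsilon$ this keeps $M r^{Q-1}$ pinned between two positive constants and hence $r_0$ bounded away from $1$ uniformly in $\epsilon$. The paper then separates $S_0(K)$ from the $S_i(K)$ not by crude ball containment but by projecting onto the one-dimensional horizontal subgroup $\V$: since $K' \subset \W_a$ projects to the single point $a$, the projected IFS on $\R$ reduces disjointness to the scalar inequality $r_0 + r < 1$. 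Your ball argument can be made to work after fixing the $M$--$r$ coupling, but it requires tracking $R$, $a$, $b$ and the packing constants simultaneously; the paper's hyperplane-plus-projection device replaces that with a one-line scalar check. A smaller point: your induction for (ii) starting from $\overline{B(0,R)}$ fails as written, since $S_0(\overline{B(0,R)})$ is a full ball around $0$, not contained in the cone $\widehat{U}$; you should instead iterate on a set of the form $\overline{\widehat{U'}} \cap \overline{B(0,R)}$ with $\overline{U'} \subset U$, or argue as the paper does via $K = \bigcup_{k\ge 0} S_0^k\big(\bigcup_{i\ge 1} S_i(K)\big)$ and dilation invariance of $\widehat{U}$.
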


Fix a horizontal vector $\vect \in \fv_1$ and denote by $\V := \{
\exp(t\vect) \, : \, t \in \R \}$ the corresponding horizontal
one-parameter subgroup of $\GG$. Denote by $\W := \exp (\vect^\perp
\times \fv_2 \times \cdots \times \fv_s)$ the corresponding
complementary vertical subgroup, and by $\W_a$, $a \in \V$, the coset
$a * \W$ of $\W$. We may choose $\vect$ and $a$ so that $U \cap \W_a
\ne \emptyset$. In what follows we will assume that $\vect$ and $a$
have been so chosen.

\begin{lemma}\label{lem-example}
There exists a self-similar iterated function system $\cF' = \{ S_i :
i=1,\ldots,M \}$ with invariant set $K'$ such that the following
conditions are satisfied:
\begin{itemize}
\item[(i)] the fixed points of each of the maps $S_i$, $1\le i\le M$,
  lie in $\widehat{U} \cap \W_a$,
\item[(ii)] $K' \subset \widehat{U} \cap \W_a$,
\item[(iii)] the pieces $S_1(K'),\ldots,S_M(K')$ are pairwise disjoint, and
\item[(iv)] $0<\cH^t(K')<\infty$, where $t$ is the Hausdorff dimension of $K'$.
\end{itemize}
\end{lemma}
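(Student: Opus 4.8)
The plan is to transplant a standard strongly separated self-similar Cantor set from the homogeneous subgroup $\W$ onto its coset $\W_a$ by means of the left translation $\Psi_a:w\mapsto a\cdot w$.

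First I would record the relevant structure of $\W$. The subspace $\mathfrak{w}:=\vect^\perp\oplus\fv_2\oplus\cdots\oplus\fv_s$ of $\fg$ is a Lie subalgebra, since all brackets among its layers lie in $\mathfrak{w}$ (for instance $[\vect^\perp,\vect^\perp]\subset[\fv_1,\fv_1]=\fv_2\subset\mathfrak{w}$), and it is $\delta_t$-graded because each $\delta_t$ preserves every $\fv_i$ and $\vect^\perp\subset\fv_1$. Hence $\W=\exp\mathfrak{w}$ is a homogeneous subgroup of $\GG$, invariant under all $\delta_t$, of homogeneous dimension $(\dim\fv_1-1)+\sum_{i=2}^{s}i\dim\fv_i=Q-1$. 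Consequently $d|_{\W}$ is a metric on $\W$ that is left invariant under $\W$, satisfies $d(\delta_t w,\delta_t w')=t\,d(w,w')$, and induces the manifold topology on $\W$; see \cite{blu}. The map $\Psi_a$ is an isometry of $(\GG,d)$ carrying $\W$ onto $\W_a$, and for $\omega\in\W$ and $r\in(0,1)$ it conjugates the $\GG$-similarity $T(w)=\omega\cdot\delta_r(w)$, which maps $\W$ onto $\W$, to the $\GG$-similarity $\Psi_a\circ T\circ\Psi_a^{-1}$, given on all of $\GG$ by $p\mapsto q\cdot\delta_r(p)$ with $q=a\cdot\omega\cdot\delta_r(a)^{-1}$ and of ratio $r$. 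Finally, since $\widehat{U}$ is open in $\GG\setminus\{0\}$ and $U\cap\W_a\neq\emptyset$, the set $\Omega:=\Psi_a^{-1}(\widehat{U}\cap\W_a)$ is a nonempty relatively open subset of $\W$. The lemma is thereby reduced to constructing, inside $\Omega$, a strongly separated self-similar Cantor set for similarities of the form $T_i=\omega_i\cdot\delta_{r}$ with $\omega_i\in\W$.

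Next I would carry out that construction. Fix a closed ball $\overline{B}=\overline{B}_{\W}(w_0,\rho)\subset\Omega$ with respect to a fixed homogeneous metric on $\W$ (say $d|_{\W}$), choose an integer $M\geq 2$ and a common ratio $r\in(0,1)$, and choose $\omega_1,\dots,\omega_M\in\W$ so that the sets $T_i(\overline{B})=\overline{B}_{\W}(\omega_i\delta_r(w_0),r\rho)$ are pairwise disjoint and contained in the open ball $B_{\W}(w_0,\rho)$. This is possible once $r$ is small, since then $B_{\W}(w_0,\rho)$ contains $M$ points that are pairwise $3r\rho$-separated, and one takes $\omega_i:=c_i\cdot\delta_r(w_0)^{-1}$ for such points $c_i$; with additional freedom in the choice of $M$ and $r$ one may also push $t$ below, and arbitrarily close to, $Q-1$, which is what the construction of $\cF$ in Theorem~\ref{th-example} exploits. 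Because $\bigcup_i T_i(\overline{B})\subset\overline{B}$, the attractor $K_0'$ of $\{T_1,\dots,T_M\}$ satisfies $K_0'\subset\overline{B}\subset\Omega$, and the unique fixed point of each $T_i$ lies in $T_i(\overline{B})\subset\overline{B}\subset\Omega$. The sets $T_i(K_0')\subset T_i(\overline{B})$ are pairwise disjoint, so the strong separation condition holds; by the standard theory of self-similar sets of contracting similarities in complete metric spaces (see, e.g., \cite{M} or \cite{chousionis-mattila}) it follows that $\dim_H K_0'=t$, where $Mr^t=1$, and $0<\cH^{t}(K_0')<\infty$.

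Finally I would set $K':=\Psi_a(K_0')$ and $\cF'=\{S_1,\dots,S_M\}$, where $S_i(p)=q_i\cdot\delta_r(p)$ with $q_i=a\cdot\omega_i\cdot\delta_r(a)^{-1}$, that is, $S_i=\Psi_a\circ T_i\circ\Psi_a^{-1}$. Since $\Psi_a T_i=S_i\Psi_a$, the set $K'$ is the invariant set of $\cF'$; since $\Psi_a$ is a $d$-isometry, $\dim_H K'=t$ and $\cH^{t}(K')=\cH^{t}(K_0')\in(0,\infty)$, proving (iv); applying $\Psi_a$ to $K_0'\subset\Omega$ and to the fixed points of the $T_i$ gives (ii) and (i), because $\Psi_a(\Omega)=\widehat{U}\cap\W_a$ and the fixed point of $S_i$ is $\Psi_a$ of that of $T_i$; and (iii) holds because $S_i(K')=\Psi_a(T_i(K_0'))$ and $\Psi_a$ is injective. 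The step requiring the most care is the first one: verifying that $\W$ is a $\delta_t$-invariant homogeneous subgroup, that $d|_{\W}$ is a genuine left-invariant, dilation-homogeneous metric inducing the manifold topology, and that conjugation by $\Psi_a$ transfers the self-similar structure while preserving both the contraction ratios and $\cH^{t}$; granting this, the construction of the separated configuration and the dimension computation are routine applications of the classical self-similar set machinery.
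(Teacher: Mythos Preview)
Your proof is correct and is arguably cleaner than the paper's, but the two take genuinely different routes. You work in the homogeneous subgroup $\W$ itself, where similarities of the form $w\mapsto\omega\cdot\delta_r(w)$ preserve $\W$, build a strongly separated self-similar set inside a fixed ball $\overline{B}\subset\Omega$ by the standard ``nested balls'' argument, and then conjugate by the isometry $\Psi_a$ to push everything onto the coset $\W_a$. The paper instead works directly on $\W_a$: it invokes the Ahlfors $(Q-1)$-regularity of $\W_a$ to choose a maximal $\epsilon$-separated net $p_1,\ldots,p_M$ in a CC ball $B$ (so $M\asymp\epsilon^{1-Q}$), defines $S_i(p)=p_i\cdot\delta_r(p_i^{-1}\cdot p)$ with a carefully chosen $r$, and uses Lemma~\ref{elementary-algebraic-fact} plus repeated triangle inequalities to show $K'\subset(1+2C_0)B\subset\widehat{U}$ and $S_i(K')\subset B(p_i,\tfrac15\epsilon)$.

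What each approach buys: your argument is more conceptual and avoids the somewhat delicate iteration bound $d(S_w(p_1),p_1)\le 2C_0\diam B$; the invariance $T_i(\overline{B})\subset\overline{B}$ does that work for free. The paper's argument, on the other hand, is deliberately quantitative: the explicit relation \eqref{choice-of-r} between $r$ and $\epsilon$, the bound \eqref{diamKbound}, and Remark~\ref{dist-sik-sik} are all reused in the proof of Theorem~\ref{th-example} to verify $r_0+r<1$ and to show that $S_0(K)$ is disjoint from the other pieces. Your passing remark that $t$ can be pushed close to $Q-1$ is correct, but if you were to continue to Theorem~\ref{th-example} along your lines you would still need to recover comparable quantitative control on $M$, $r$, and the separation of the pieces.
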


\begin{remark}\label{metric-remark}
In both Theorem \ref{th-example} and Lemma \ref{lem-example},
condition (iv) follows from condition (iii), by results of Schief, see \cite[Theorem 2.5]{schiefms}. 
\end{remark}

In the proofs we will use the following elementary algebraic fact.

\begin{lemma}\label{elementary-algebraic-fact}
There exists a constant $C_0\ge1$ so that 
\begin{equation}\label{e-a-f-equation}
d(\delta_r(q),q) \le C_0||q||_{cc}
\end{equation}
for all $q \in \GG$ and $0\le r\le 1$.
\end{lemma}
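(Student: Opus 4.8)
The plan is to reduce the estimate to the triangle inequality together with the scaling property $d(\delta_r(p),\delta_r(q)) = r\,d(p,q)$ of the Carnot--Carath\'eodory metric. First I would record that every dilation $\delta_r$ is an automorphism of $\GG$ and hence fixes the identity, $\delta_r(0)=0$; applying the scaling identity with one endpoint at the origin therefore gives
$$
d(\delta_r(q),0) = d(\delta_r(q),\delta_r(0)) = r\,d(q,0) = r\,\|q\|_{cc}.
$$
Next, inserting the intermediate point $0$ in the triangle inequality and using $\|\cdot\|_{cc}=d(\cdot,0)$,
$$
d(\delta_r(q),q) \le d(\delta_r(q),0) + d(0,q) = r\,\|q\|_{cc} + \|q\|_{cc} = (1+r)\,\|q\|_{cc}.
$$
Since $0\le r\le 1$ the right-hand side is at most $2\|q\|_{cc}$, so the assertion holds with $C_0 = 2$.

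There is essentially no obstacle here: the only two points requiring a moment's care are that $\delta_r$ fixes $0$, so that the scaling law for $d$ applies directly with one argument at the origin, and that $\|\cdot\|_{cc}$ is by definition $d(\cdot,0)$. If one wished to phrase the conclusion with an arbitrary homogeneous norm $\|\cdot\|$ in place of $\|\cdot\|_{cc}$, the identical computation yields $d(\delta_r(q),q)\lesssim \|q\|$, where the implied constant absorbs the global equivalence \eqref{companorms} of homogeneous norms; but for the applications in this section the bare inequality with $C_0=2$ relative to the Carnot--Carath\'eodory norm is all that is needed.
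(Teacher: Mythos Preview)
Your argument is correct and in fact more direct than the paper's. The paper proves the lemma by a soft compactness argument: both sides of \eqref{e-a-f-equation} are $1$-homogeneous under the dilations $\delta_t$, so it suffices to treat $\|q\|_{cc}=1$; then $(q,r)\mapsto d(\delta_r(q),q)$ is continuous on the compact set $\Sph\times[0,1]$, hence bounded. This yields the existence of $C_0$ but no explicit value.

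Your route via the triangle inequality through the origin, together with the scaling identity $d(\delta_r(q),0)=r\,d(q,0)$, is strictly more elementary and produces the explicit constant $C_0=2$. The paper's approach has the mild advantage of being a template that would apply to any continuous $1$-homogeneous quantity, but for this particular inequality your computation is preferable: it avoids invoking compactness of the CC sphere and makes transparent that nothing beyond left invariance and dilation-homogeneity of $d$ is used.
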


\begin{proof}
By the $1$-homogeneity of both sides of the desired inequality
\eqref{e-a-f-equation}, it suffices to establish the result for points
$q$ with $||q||_{cc}=1$. Since the function $(q,r) \mapsto
d(\delta_r(q),q)$ is continuous from $\GG\times[0,1] \to \R$, the
conclusion follows from compactness of the CC unit sphere.
\end{proof}


\begin{proof}[Proof of Lemma \ref{lem-example}]
We first observe that the coset $\W_a$, equipped with the restriction
of the Carnot-Carath\'eodory metric, is AD $(Q-1)$-regular. This
can be proved in several ways. For instance, we may observe that each
such coset $\W_a$ is isometric to the vertical subgroup $\W$, and that
the Haar measure on $\W$ is AD $(Q-1)$-regular.

Let $B$ be a Carnot-Carath\'eodory ball centered at a point of $U \cap
\W_a$ such that $(1+2C_0)B \subset \widehat{U}$ and $\diam B\le2$, where $C_0$ is as
above. For $\eps>0$, let $p_1,\ldots,p_M \in B \cap \W_a$ be a maximal
collection of points with mutual distance at least $\eps \diam B$. By
the Ahlfors regularity of~$\W_a$, 
\begin{equation}\label{ADRQ-1}
\frac1{C_1} \eps^{1-Q} \le M \le C_1 \eps^{1-Q},
\end{equation}
where $C_1\ge1$ is independent of $\eps$. The choice of $\eps$ will be
made later in the proof, but we note here that we may choose $\eps$
small enough that $M\ge 2^{Q-1}$.

Let $r>0$ be such that
\begin{equation}\label{choice-of-r}
r=
\frac{\eps\diam B}{2C_1^{1/(Q-1)}(10+50C_0(\diam B))}.
\end{equation}
Note that $r<M^{1/(1-Q)}$ by \eqref{ADRQ-1}. In particular,
$r<\tfrac12$.

We consider the self-similar iterated function system $\cF' = \{S_i \,
: \, i=1,\ldots,M\}$, where $S_i$ is the contraction mapping of $\GG$
with fixed point $p_i$ and contraction ratio $r$. Explicitly,
$S_i:\GG\to\GG$ is given by
$$
S_i(p) = p_i * \delta_r(p_i^{-1} * p), \quad i=1,\ldots,M.
$$
Let $K'$ be the invariant set for $\cF'$. Condition (i) is true by
construction. The inclusion $K' \subset \W_a$ is true since $K'$ is
the closure of the full orbit of the set of fixed points and the coset
$\W_a$ is invariant under each of the maps $S_1,\ldots,S_M$. 

To proceed further we introduce the terminology and notation of
symbolic dynamics. Let $W=\{1,\ldots,M\}$ be the symbol space, let
$W_m$ be the $m$-fold product of $W$ with itself (with $W_0$
containing only the empty set), and let $W_* = \cup_{m\ge 0} W_m$.
Elements of $W_m$ are called {\it words of length $m$} in letters
drawn from $W$. For $w \in W_*$, $w=w_1w_2\cdots w_m$, set $S_w =
S_{w_1} \circ S_{w_2} \circ \cdots \circ S_{w_m}$. 

We will make use of the fact that $K'$ is the closure of the set
$$
\bigcup_{w \in W_*} S_w(p_1);
$$
similarly, for each $i$, $S_i(K')$ is the closure of the set
$$
\bigcup_{w \in W_*} S_{iw}(p_1).
$$

Let $w=w_1w_2\cdots w_m \in W_*$. Repeated application of the triangle
inequality, together with the fact that $S_i$ is a similarity with
contraction ratio $r$, shows that $d(S_w(p_1),p_1)$ is less than or
equal to
$$
d(p_1,S_{w_1}(p_1)) + r \, d(p_1,S_{w_{2}}(p_1))
+ r^2 \, d(p_1,S_{w_{3}}(p_1)) + \cdots + r^{m-1} \,
d(p_1,S_{w_m}(p_1)).
$$
For any $i=1,\ldots,M$,
$$
d(p_1,S_i(p_1)) = d(p_1,p_i*\delta_r(p_i^{-1}*p_1)) 
= d(p_i^{-1}*p_1,\delta_r(p_i^{-1}*p_1)).
$$
Applying Lemma \ref{elementary-algebraic-fact} yields
$$
d(p_1,S_i(p_1)) \le C_0 d(p_1,p_i) \le C_0 \diam B.
$$
Consequently, since $r<\tfrac12$,
$$
d(S_w(p_1),p_1) \le C_0 \frac1{1-r} \diam B \le 2 C_0 \diam B
$$
and so
$$
K' \subset B(p_1,2C_0 \diam B) \subset (1+2C_0)B \subset
\widehat{U}.
$$
This completes the proof of condition (ii). We note in passing that
\begin{equation}\label{diamKbound}
\diam K' \le 4C_0 \diam B.
\end{equation}
In view of Remark \ref{metric-remark}, it remains only to check
condition (iii). Note that the dimension of $K'$, $\log M/\log(1/r)$,
is strictly less than $Q-1$ by the choice of $r$. 

To verify (iii) we show that
\begin{equation}\label{sik-incl}
S_i(K') \subset B(p_i,\tfrac15 \eps)
\end{equation}
for each $i=1,\ldots,M$. (Recall that $d(p_i,p_{i'}) \ge \eps$ for all
$i\ne i'$.) Following a similar argument as above and using
\eqref{diamKbound}, we conclude that
$$
d(S_{iw}(p_1),p_i) = d(S_{iw}(p_1),S_i(p_i)) = rd(S_w(p_1),p_i) \le
(\diam K')r \le 4C_0(\diam B)r.
$$
By the choice of $r$,
$$
4C_0(\diam B)r < 
\frac15\eps.
$$
The proof of \eqref{sik-incl} is complete.
\end{proof}

\begin{remark}\label{dist-sik-sik}
We record the following consequence of \eqref{sik-incl} and the
definition of $\eps$:
$$
\dist(S_i(K'),S_{i'}(K')) \ge \frac35\eps \qquad \mbox{for all $1\le
  i,i'\le M$, $i\ne i'$.}
$$
\end{remark}

\begin{proof}[Proof of Theorem \ref{th-example}]
Let $S_1,\ldots,S_M$ be selected as in the proof of Lemma
\ref{lem-example}, define $r_0>0$ by the equation
$$
r_0^{Q-1} + M r^{Q-1} = 1,
$$
and let $S_0$ be the contraction mapping $S_0(p) = \delta_{r_0}(p)$.
Then condition (i) is satisfied. In view of Remark
\ref{metric-remark}, it suffices to verify conditions (ii) and (iii).

We will employ symbolic dynamics as introduced in the preceding proof
to both iterated function systems $\cF'$ and $\cF$. In order to
distinguish between these two systems, we continue to denote by
$W=\{1,\ldots,M\}$ the word space for the IFS $\cF'$. We let
$V=\{0,\ldots,M\}$ be the symbol space for the IFS $\cF$, we let $V_m$
be the $m$-fold product of $V$ with itself, and we let $V_* =
\cup_{m\ge 0} V_m$. For $v \in V_*$, $v=v_1v_2\cdots v_m$, we set $S_v
= S_{v_1} \circ S_{v_2} \circ \cdots \circ S_{v_m}$. We make use of
the fact that $K$ is the closure of the set
$$
\bigcup_{v \in V_*} S_v(K');
$$
similarly, for each $i=1,\ldots,M$, $S_i(K)$ is the closure of the set
$$
\bigcup_{v \in V_*} S_{iv}(K').
$$
Each element $v \in V_*$ of length $m$ can be uniquely written in the
form
$$
v = u_{k_0}w_{\ell_0}u_{k_1}w_{\ell_1}\cdots u_{k_{T-1}}w_{\ell_{T-1}}u_{k_T}
$$
where $u_k$ is a word consisting of $k$ copies of the letter $0$,
$w_\ell \in W_\ell$, $k_0,\ldots,k_T \ge 0$,
$\ell_0,\ell_1,\ldots,\ell_{T-1} \ge 1$, and
$$
k_0+\ell_0+k_1+\ell_1+\cdots+k_{T-1}+\ell_{T-1}+k_T = m.
$$
Words in $V_*$ with initial letter $i$, in the above representation,
are precisely words for which $k_0=0$ and $w_{\ell_0}$ begins with the
letter $i$. We analyze the image of $K'$ under such words.

For $\delta>0$ and $S \subset \GG$, we denote by $N(S,\delta) = 
\{ p \in \GG \, : \, \dist(p,S) < \delta \}$ the $\delta$-neighborhood
of $S$. 

\begin{lemma}\label{lemma1}
There exists a constant $C>0$ so that if $w \in W_\ell$ and $k,\ell
\in \N$, then 
$$
(S_w \circ S_0^k)(K') \subset N(S_w(K'),r^\ell(1+5C_0(\diam B))).
$$
\end{lemma}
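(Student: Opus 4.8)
\textbf{Proof proposal for Lemma \ref{lemma1}.}

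The plan is to exploit the self-similarity of the map $S_w$ (a similarity with contraction ratio $r^\ell$ when $w\in W_\ell$) to reduce the claim to a statement purely about $S_0^k$ applied to $K'$, and then to use the fact that $0$, the fixed point of $S_0$, lies inside the vertical coset $\W_a$ only after a dilation — so the relevant displacement is controlled by Lemma \ref{elementary-algebraic-fact}. First I would observe that $S_w$ is a similarity with ratio $r^\ell$, hence for any sets $A,B\subset\GG$ one has $\dist(S_w(A),S_w(B)) = r^\ell\dist(A,B)$, and more generally $S_w(N(B,\rho)) = N(S_w(B), r^\ell\rho)$. Therefore it suffices to prove the case $w$ empty (i.e. $\ell=0$, $S_w=\mathrm{id}$), namely
\begin{equation*}
S_0^k(K') \subset N(K', 1+5C_0(\diam B)),
\end{equation*}
and then apply $S_w$ to both sides; the factor $r^\ell$ appears exactly as in the statement. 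Wait — the statement has $r^\ell$ multiplying the whole neighborhood radius, so this reduction is clean provided the $\ell=0$ bound has the stated form with no $r$-dependence, which it does.

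Next I would estimate $d(S_0^k(p), p)$ for $p\in K'$. Since $S_0(p)=\delta_{r_0}(p)$, we have $S_0^k(p)=\delta_{r_0^k}(p)$, and since $0\le r_0^k\le 1$, Lemma \ref{elementary-algebraic-fact} gives $d(\delta_{r_0^k}(p),p)\le C_0\|p\|_{cc}$. To bound $\|p\|_{cc}=d(p,0)$ for $p\in K'$, I use that $K'\subset B(p_1, 2C_0\diam B)$ (shown in the proof of Lemma \ref{lem-example}), that $p_1\in B$ with $\diam B\le 2$ and $B$ centered at a point of $U\cap\W_a\subset\Sph$, so $\|p_1\|_{cc}\le 1+\diam B$; hence $\|p\|_{cc}\le 1+\diam B + 2C_0\diam B \le 1 + (1+2C_0)\diam B$. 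Thus $d(S_0^k(p),p)\le C_0(1+(1+2C_0)\diam B)$. One checks this is at most $1+5C_0(\diam B)$ after absorbing constants (using $\diam B\le 2$ and $C_0\ge 1$ to compare $C_0 + C_0\diam B + 2C_0^2\diam B$ against $1+5C_0\diam B$); if the precise numerical constant does not work out one simply enlarges the constant $C$ in the statement — note the lemma only asserts existence of \emph{some} $C>0$, so I would phrase the bound as $d(S_0^k(p),p)\le C(1+C_0\diam B)$ with $C$ depending only on the group, and observe this is dominated by $r^\ell$ times the claimed radius when $w$ is reinstated.

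Finally, I would assemble the pieces: for $p\in K'$, $S_0^k(p)\in N(K',\rho_0)$ with $\rho_0 = C(1+C_0\diam B)$, hence $K'\subset$ (closure of $\bigcup_k S_0^k(K')$)... actually more directly, $S_0^k(K')\subset N(K',\rho_0)$, and applying the similarity $S_w$ gives $(S_w\circ S_0^k)(K') = S_w(S_0^k(K')) \subset S_w(N(K',\rho_0)) = N(S_w(K'), r^\ell\rho_0)$, which is the assertion with $C$ chosen so that $r^\ell\rho_0 \le r^\ell(1+5C_0(\diam B))$. The main obstacle — really the only point requiring care — is tracking the explicit constants so that the neighborhood radius comes out exactly as $r^\ell(1+5C_0(\diam B))$ rather than something with extra $C_0^2$ or $\diam B$ factors; since the lemma is stated with an unspecified constant $C$ this is a bookkeeping matter, and the genuinely new input is simply the combination of the dilation estimate (Lemma \ref{elementary-algebraic-fact}) with the a priori diameter bound $\diam K'\le 4C_0\diam B$ and the location of $K'$ near the CC-unit sphere. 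The choice of the constant $C$ here is presumably also what is needed later to make Lemma \ref{lemma1} feed into the disjointness verification for the pieces $S_0(K),\ldots,S_M(K)$, so I would keep $C$ explicit enough to be reused.
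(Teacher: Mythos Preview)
Your proposal is correct and follows essentially the same route as the paper: reduce via the similarity $S_w$ (contraction ratio $r^\ell$) to bounding $d(S_0^k(p),p)$ for $p\in K'$, then apply Lemma~\ref{elementary-algebraic-fact} with $r_0^k\in[0,1]$ together with the a~priori bound on $\|p\|_{cc}$ coming from the location and diameter of $K'$. You are in fact more scrupulous about the constants than the paper, which bounds $\|p\|_{cc}\le 1+5C_0(\diam B)$ and then tacitly drops the extra factor of $C_0$ coming from Lemma~\ref{elementary-algebraic-fact}; the orphaned ``$C>0$'' in the lemma statement is presumably a remnant of exactly this bookkeeping issue that you flagged.
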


\begin{proof}
Recalling \eqref{diamKbound}, we note that it suffices to prove that
$$
d(S_w(S_0^k(p)),S_w(p)) \le r^\ell (1+5C_0(\diam B)).
$$
for all $p \in K'$. Since $S_w$ has contraction ratio $r^\ell$, this
is equivalent to proving that
$$
d(S_0^k(p),p) \le 1+5C_0(\diam B)
$$
By Lemma \ref{elementary-algebraic-fact},
$$
d(S_0^k(p),p) \le C_0 ||p||_{cc}.
$$
Using the fact that $B\cap K' \ne \emptyset$ and \eqref{diamKbound},
we obtain
$$
||p||_{cc} \le 1 + \diam B + \diam K' \le 1 + (1+4C_0)(\diam B) \le
1 + 5C_0(\diam B),
$$
completing the proof.
\end{proof}

In a geodesic metric space (e.g., $\GG$ equipped with the
Carnot--Carath\'eodory metric), we have
$$
N(N(S,\delta),\eps) = N(S,\delta+\eps) \qquad \mbox{for any set $S$
  and any $\delta,\eps>0$.}
$$
This fact and an easy inductive argument leads to the following
result.

\begin{lemma}\label{lemma2}
If $w_{\ell_0} \in W_{\ell_0}$, $w_{\ell_1} \in W_{\ell_1}$, \dots,
$w_{\ell_{T-1}} \in W_{\ell_{T-1}}$ and $k_1,\ldots,k_T \in \N$, then
$$
(S_{w_{\ell_0}} \circ S_0^{k_1} \circ S_{w_{\ell_1}} \circ S_0^{k_2}
\circ \cdots \circ S_{w_{\ell_{T-1}}} \circ S_0^{k_T})(K')
\subset N(S_{w_{\ell_0}}(K'), \rho)
$$
where
$$
\rho = (r^{\ell_0} + r^{\ell_0+\ell_1} + \cdots +
r^{\ell_0+\ell_1+\cdots+\ell_{T-1}})(1+5C_0(\diam B))).
$$
\end{lemma}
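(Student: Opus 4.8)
The plan is to prove Lemma \ref{lemma2} by induction on $T$, using Lemma \ref{lemma1} as the engine of each step and the neighborhood-additivity identity $N(N(S,\delta),\e)=N(S,\delta+\e)$ recorded just before the statement. First I would set up the induction. The base case $T=1$ says $(S_{w_{\ell_0}}\circ S_0^{k_1})(K')\subset N(S_{w_{\ell_0}}(K'),r^{\ell_0}(1+5C_0(\diam B)))$, which is exactly Lemma \ref{lemma1} applied with $w=w_{\ell_0}$, $\ell=\ell_0$, $k=k_1$. So the base case is immediate.

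For the inductive step, suppose the claim holds for $T-1$, i.e.
$$
(S_{w_{\ell_1}}\circ S_0^{k_2}\circ\cdots\circ S_{w_{\ell_{T-1}}}\circ S_0^{k_T})(K')\subset N\bigl(S_{w_{\ell_1}}(K'),\rho'\bigr),
$$
where $\rho'=(r^{\ell_1}+r^{\ell_1+\ell_2}+\cdots+r^{\ell_1+\cdots+\ell_{T-1}})(1+5C_0(\diam B))$. I would now apply the similarity $S_{w_{\ell_0}}\circ S_0^{k_1}$ to both sides. Since $S_{w_{\ell_0}}\circ S_0^{k_1}$ is a similarity with contraction ratio $r^{\ell_0}$ (as $S_0$ has ratio $r_0$—wait, $S_0$ has ratio $r_0$, not $r$, so one must be slightly careful here: composing with $S_0^{k_1}$ multiplies the ratio by $r_0^{k_1}\le 1$, which only helps, so the net contraction ratio of $S_{w_{\ell_0}}\circ S_0^{k_1}$ is $r^{\ell_0}r_0^{k_1}\le r^{\ell_0}$, and a similarity of ratio $\le c$ maps $N(S,\delta)$ into $N(\text{image}, c\delta)$). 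Thus the left side of the $T$-fold composition lands in $N\bigl((S_{w_{\ell_0}}\circ S_0^{k_1})(S_{w_{\ell_1}}(K')),r^{\ell_0}\rho'\bigr)$. Next I bound $(S_{w_{\ell_0}}\circ S_0^{k_1})(S_{w_{\ell_1}}(K'))$: this set equals $(S_{w_{\ell_0}}\circ S_0^{k_1}\circ S_{w_{\ell_1}})(K')\subset (S_{w_{\ell_0}}\circ S_0^{k_1})(N(S_{\text{pt}},\ldots))$—more directly, I apply Lemma \ref{lemma1} to the composite word $w=w_{\ell_0}$ followed by observing $S_0^{k_1}\circ S_{w_{\ell_1}}(K')\subset N(S_{w_{\ell_1}}(K'), \ldots)$; cleanest is: by Lemma \ref{lemma1} with $w=w_{\ell_0}w_{\ell_1}$ (a word of length $\ell_0+\ell_1$) we cannot directly use it because $S_0^{k_1}$ sits between. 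So instead I apply Lemma \ref{lemma1} with $w=w_{\ell_0}$, $\ell=\ell_0$, $k=k_1$ to get $(S_{w_{\ell_0}}\circ S_0^{k_1})(K')\subset N(S_{w_{\ell_0}}(K'),r^{\ell_0}(1+5C_0(\diam B)))$, and then note $S_{w_{\ell_1}}(K')\subset K'$ so $(S_{w_{\ell_0}}\circ S_0^{k_1})(S_{w_{\ell_1}}(K'))\subset (S_{w_{\ell_0}}\circ S_0^{k_1})(K')\subset N(S_{w_{\ell_0}}(K'),r^{\ell_0}(1+5C_0(\diam B)))$. Combining via $N(N(S,\delta),\e)=N(S,\delta+\e)$ gives containment in $N\bigl(S_{w_{\ell_0}}(K'),r^{\ell_0}(1+5C_0(\diam B))+r^{\ell_0}\rho'\bigr)$, and $r^{\ell_0}(1+5C_0(\diam B))+r^{\ell_0}\rho'=\rho$ by the definitions of $\rho$ and $\rho'$, completing the induction.

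The main obstacle I anticipate is purely bookkeeping: keeping straight that $S_0$ has its own ratio $r_0\le1$ (so compositions with powers of $S_0$ never expand), that $S_{w_{\ell_j}}(K')\subset K'$, and that the telescoping of the geometric-type sum $r^{\ell_0}+r^{\ell_0}\rho'$ reassembles exactly into $\rho$. There is no deep content—the geometric decay of the radii is baked into Lemma \ref{lemma1}, and the additivity of neighborhoods in a geodesic space is already recorded—so once the induction is organized correctly the proof is a few lines. I would present it compactly as ``an easy inductive argument'' spelling out the base case and the single reduction step above.

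\begin{proof}
We argue by induction on $T$. When $T=1$ the assertion is precisely the content of Lemma~\ref{lemma1}, applied with $w=w_{\ell_0}$, $\ell=\ell_0$ and $k=k_1$:
$$
(S_{w_{\ell_0}}\circ S_0^{k_1})(K')\subset N\bigl(S_{w_{\ell_0}}(K'),\,r^{\ell_0}(1+5C_0(\diam B))\bigr).
$$

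Assume now the statement holds for $T-1$ in place of $T$; applied to the words $w_{\ell_1},\ldots,w_{\ell_{T-1}}$ and the integers $k_2,\ldots,k_T$ it gives
$$
(S_{w_{\ell_1}}\circ S_0^{k_2}\circ\cdots\circ S_{w_{\ell_{T-1}}}\circ S_0^{k_T})(K')\subset N\bigl(S_{w_{\ell_1}}(K'),\,\rho'\bigr),
$$
where $\rho'=(r^{\ell_1}+r^{\ell_1+\ell_2}+\cdots+r^{\ell_1+\cdots+\ell_{T-1}})(1+5C_0(\diam B))$. The map $S_{w_{\ell_0}}\circ S_0^{k_1}$ is a similarity of $\GG$ whose contraction ratio is $r^{\ell_0}r_0^{k_1}\le r^{\ell_0}$, since $S_{w_{\ell_0}}$ has ratio $r^{\ell_0}$ and $S_0$ has ratio $r_0\le1$. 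Applying it to the previous inclusion yields
$$
(S_{w_{\ell_0}}\circ S_0^{k_1}\circ S_{w_{\ell_1}}\circ S_0^{k_2}\circ\cdots\circ S_0^{k_T})(K')\subset N\bigl((S_{w_{\ell_0}}\circ S_0^{k_1})(S_{w_{\ell_1}}(K')),\,r^{\ell_0}\rho'\bigr).
$$
Since $S_{w_{\ell_1}}(K')\subset K'$, applying Lemma~\ref{lemma1} once more with $w=w_{\ell_0}$, $\ell=\ell_0$ and $k=k_1$ gives
$$
(S_{w_{\ell_0}}\circ S_0^{k_1})(S_{w_{\ell_1}}(K'))\subset (S_{w_{\ell_0}}\circ S_0^{k_1})(K')\subset N\bigl(S_{w_{\ell_0}}(K'),\,r^{\ell_0}(1+5C_0(\diam B))\bigr).
$$
Combining the last two inclusions by means of the identity $N(N(S,\delta),\e)=N(S,\delta+\e)$, valid in the geodesic space $\GG$, we obtain
$$
(S_{w_{\ell_0}}\circ S_0^{k_1}\circ\cdots\circ S_0^{k_T})(K')\subset N\bigl(S_{w_{\ell_0}}(K'),\,r^{\ell_0}(1+5C_0(\diam B))+r^{\ell_0}\rho'\bigr).
$$
Finally, $r^{\ell_0}(1+5C_0(\diam B))+r^{\ell_0}\rho'=(r^{\ell_0}+r^{\ell_0+\ell_1}+\cdots+r^{\ell_0+\ell_1+\cdots+\ell_{T-1}})(1+5C_0(\diam B))=\rho$, which completes the inductive step and the proof.
\end{proof}
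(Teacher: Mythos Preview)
Your proof is correct and follows precisely the approach the paper intends: the paper itself merely states that Lemma~\ref{lemma2} follows from Lemma~\ref{lemma1} together with the identity $N(N(S,\delta),\e)=N(S,\delta+\e)$ ``by an easy inductive argument,'' and your induction on $T$ carries this out exactly. The only details you add beyond what the paper records---that $S_{w_{\ell_0}}\circ S_0^{k_1}$ contracts by $r^{\ell_0}r_0^{k_1}\le r^{\ell_0}$ and that $S_{w_{\ell_1}}(K')\subset K'$---are the natural ones needed to make the induction go through, and the final telescoping $r^{\ell_0}(1+5C_0(\diam B))+r^{\ell_0}\rho'=\rho$ is correct.
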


We now conclude the proof of Theorem \ref{th-example}. Since
$r<\tfrac12$ and $\ell_0\ge 1$, we deduce from Lemma \ref{lemma2} that
$$
r^{\ell_0} + r^{\ell_0+\ell_1} + \cdots +
r^{\ell_0+\ell_1+\cdots+\ell_{T-1}} 
\le \frac{r^{\ell_0}}{1-r} \le 2r 
$$
and hence that
$$
(S_{w_{\ell_0}} \circ S_0^{k_1} \circ S_{w_{\ell_1}} \circ S_0^{k_2}
\circ \cdots \circ S_{w_{\ell_{T-1}}} \circ S_0^{k_T})(K') \subset
N(S_{w_{\ell_0}}(K'),2(1+5C_0(\diam B))r).
$$
In particular, if the first letter of $w_{\ell_0}$ is $i$, then
\begin{equation}\label{inclusion-equation}
(S_{w_{\ell_0}} \circ S_0^{k_1} \circ S_{w_{\ell_1}} \circ S_0^{k_2}
\circ \cdots \circ S_{w_{\ell_{T-1}}} \circ S_0^{k_T})(K') \subset
N(S_i(K'),2(1+5C_0(\diam B))r).
\end{equation}
As discussed above, this means that all sets of the form $S_v(K')$,
where $v \in V_*$ has initial letter $i$, are contained in the set on
the right hand side of \eqref{inclusion-equation}, so
$$
S_i(K) \subset N(S_{i}(K'),2(1+5C_0(\diam B))r).
$$
By the choice of $r$, $2(1+5C_0(\diam B))r < \frac15\eps$ and so 
\begin{equation}\label{siksik}
S_i(K) \subset N(S_i(K'),\frac15\eps).
\end{equation}
In view of Remark \ref{dist-sik-sik}, the sets $S_1(K),\ldots,S_M(K)$
are disjoint.

Next, we want to show that $S_0(K) \cap S_i(K) = \emptyset$ for $1\le
i\le M$. To this end, we consider projection $\pi_\V$ into the
horizontal subgroup $\V$. The set $\V$ can be isometrically identified
with $\R$; we denote by $P_\V:\GG\to\R$ the composition of $\pi_\V$
with this identification. There exists a self-similar contraction
$T_i:\R\to\R$ so that $T_i\circ P_\V = P_\V \circ S_i$. Explicitly,
$$
T_0(t) = r_0 t \qquad \mbox{and} \qquad 
T_i(t) = a + r (t-a) \qquad \mbox{for $i=1,\ldots,M$.}
$$
It suffices to prove that
$$
P_\V(S_0(K)) \cap P_\V(S_i(K)) = \emptyset,
$$
i.e.,
$$
T_0(P_\V(K)) \cap T_i(P_\V(K)) = \emptyset.
$$
Since $P_\V(K) \subset [0,a]$, the latter condition holds provided
\begin{equation}\label{r0r}
r_0+r<1.
\end{equation}
Recalling that $r$ and $r_0$ are related by $r_0^{Q-1}+Mr^{Q-1} = 1$,
we rewrite \eqref{r0r} as 
\begin{equation}\label{restriction-on-r}
r < 1 - (1-Mr^{Q-1})^{1/(Q-1)}.
\end{equation}
We observe that
\begin{equation}\label{restriction-on-r-2}
\begin{split}
Mr^{Q-1} 
&\ge \frac{M\eps^{Q-1}(\diam B)^{Q-1}}{2^{Q-1}C_1(10+50C_0\diam
  B)^{Q-1}} \\
&\ge \frac{(\diam B)^{Q-1}}{2^{Q-1}C_1^2(10+50C_0\diam
  B)^{Q-1}}
\end{split}
\end{equation}
In view of \eqref{choice-of-r} and \eqref{restriction-on-r-2}, we see
that \eqref{restriction-on-r} is satisfied provided that
$$
\frac{\eps\diam B}{C_1^{1/(Q-1)}(10+50C_0(\diam B))} <
1 - \left( 1 - \frac{(\diam B)^{Q-1}}{2^{Q-1}C_1^2(10+50C_0\diam
    B)^{Q-1}} \right)^{1/(Q-1)}.
$$
The latter inequality is true provided $\eps$ is chosen sufficiently
small. This completes the proof that $S_0(K)$ is disjoint from each of
the sets $S_i(K)$, $i=1,\ldots,M$, and hence completes the proof of
(iii).

It remains to verify (ii). We first record the identity
$$
K = \bigcup_{k\ge 0} S_0^k \left( \bigcup_{i=1}^M S_i(K)\right) \, .
$$
In view of \eqref{siksik} and the choice of the data,
$S_1(K)\cup\cdots\cup S_M(K) \subset \widehat{U}$. Since $\widehat{U}$
is a dilation cone and $S_0$ is a dilation, it follows that $K\subset
\widehat{U}$ as desired. The proof of the theorem is complete.
\end{proof}

\begin{rem}\label{ssad}
It follows easily, see e.g. \cite[Theorem 2.9]{schiefms} and
\cite[Theorem 5.3.1]{hut}, that if $K$ is the separated set of Theorem
\ref{th-example} the measure $\mathcal{H}^{Q-1} \lfloor K$ is
$(Q-1)$-AD regular.
\end{rem}

In the following we fix some notation. 

\begin{notation}\label{singnot}
For a signed Borel measure $\sigma$ set
$$
T_\sigma (p):=\int k(q^{-1}\cdot p) \, d \sigma (q), \ \text{whenever it
  exists},
$$
$$
T^\e_\sigma (p):=\int_{\hn \stm B(p,\e)} k(q^{-1}\cdot p) \, d \sigma (q)
$$
and
$$
T^\ast_\sigma (p):=\sup_{\e>0}|T^\e_\sigma (p)|.
$$
\end{notation}

The proof of the following lemma is rather similar to that of Lemma 5.4 in \cite{mpa}.

\begin{lm}\label{semmes}
Let $\sigma$ be a signed Borel measure in $\hn$ and $A_\sigma$ a positive constant such that
 $|\sigma|(B(p,r)) \leq A_\sigma r^{Q-1}$ for $ p \in \hn, r>0$. Then 
 $$|T^*_\sigma (p)| \leq \|T_\sigma\|_\infty+A_T \ \text{for} \ p \in \hn,$$
 where $A_T$ is a constant depending only on $\sigma$.
\end{lm}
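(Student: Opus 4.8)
The plan is to prove a Cotlar-type inequality: I fix $p\in\hn$ and $\e>0$ and show $|T^\e_\sigma(p)|\le\|T_\sigma\|_\infty+CA_\sigma$ for a constant $C=C(\hn)$, and then take the supremum over $\e>0$, which gives the lemma with $A_T=CA_\sigma$. (Throughout I assume, as in the applications, that $\sigma$ has finite total variation, so that all truncated integrals below converge absolutely.) The one preliminary needed is the analogue of Lemma~\ref{stanfund} for the kernel $k=\anah\G$ itself: since $\G$ is $C^\infty$ and $(2-Q)$-homogeneous off the origin, $k$ is $C^\infty$ and $(1-Q)$-homogeneous there, hence each $X_jk_i$ is $C^\infty$ and $(-Q)$-homogeneous and $|\anah k(p)|\lesssim\|p\|_\G^{-Q}$; joining two points by a sub-unit curve of length $d(a,b)$ lying in $B(a,d(a,b))$ and integrating $k$ along it, exactly as in the proof of Lemma~\ref{stanfund}, yields
\[
|k(a)-k(b)|\lesssim\frac{d(a,b)}{\|a\|_\G^{\,Q}}\qquad\text{whenever}\quad d(a,b)\le\tfrac12\|a\|_\G.
\]

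For the comparison step, fix $p,\e$ and let $q$ range over $B_0:=B(p,\tfrac{\e}{10})$; then $B(q,\tfrac\e2)\subset B(p,\e)$, so
\[
T^{\e/2}_\sigma(q)-T^{\e}_\sigma(p)=\int_{B(p,\e)\stm B(q,\e/2)}k(y^{-1}\cdot q)\,d\sigma(y)+\int_{\hn\stm B(p,\e)}\bigl(k(y^{-1}\cdot q)-k(y^{-1}\cdot p)\bigr)\,d\sigma(y).
\]
On $B(p,\e)\stm B(q,\tfrac\e2)$ we have $d(y,q)\ge\tfrac\e2$, so by \eqref{sizeest} the first integrand is $\lesssim\e^{1-Q}$, and since $|\sigma|(B(p,\e))\le A_\sigma\e^{Q-1}$ the first term is $\lesssim A_\sigma$. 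For the second term, left-invariance of $d$ gives $d(y^{-1}\cdot q,y^{-1}\cdot p)=d(q,p)<\tfrac\e{10}$ and $\|y^{-1}\cdot p\|_\G\approx d(y,p)\ge\e$, so the kernel estimate applies and $|k(y^{-1}\cdot q)-k(y^{-1}\cdot p)|\lesssim d(p,q)\,d(y,p)^{-Q}$; decomposing $\hn\stm B(p,\e)$ into the annuli $B(p,2^{j+1}\e)\stm B(p,2^{j}\e)$, $j\ge0$, and using the growth hypothesis gives $\int_{\hn\stm B(p,\e)}d(y,p)^{-Q}\,d|\sigma|(y)\lesssim A_\sigma\e^{-1}$, whence the second term is also $\lesssim A_\sigma$. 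Thus $|T^\e_\sigma(p)-T^{\e/2}_\sigma(q)|\le CA_\sigma$ for every $q\in B_0$.

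I would then average over $q\in B_0$ with respect to normalized Haar measure. Since $T^\e_\sigma(p)$ does not depend on $q$, the previous bound gives $|T^\e_\sigma(p)|\le CA_\sigma+|B_0|^{-1}\int_{B_0}|T^{\e/2}_\sigma(q)|\,dq$. For a.e.\ $q$ one may split the (principal-value) integral defining $T_\sigma(q)$ at radius $\tfrac\e2$, namely $T_\sigma(q)=T^{\e/2}_\sigma(q)+\int_{B(q,\e/2)}k(y^{-1}\cdot q)\,d\sigma(y)$, where the inner integral is absolutely convergent for a.e.\ $q\in B_0$ because, by Fubini and the left-invariance of Haar measure,
\begin{align*}
\int_{B_0}\int_{B(q,\e/2)}|k(y^{-1}\cdot q)|\,d|\sigma|(y)\,dq
&=\int_{\hn}\Bigl(\int_{B_0\cap B(y,\e/2)}|k(y^{-1}\cdot q)|\,dq\Bigr)\,d|\sigma|(y)\\
&\lesssim\Bigl(\int_{B(0,\e/2)}\|z\|_\G^{1-Q}\,dz\Bigr)\,|\sigma|(B(p,\e))\lesssim\e\cdot A_\sigma\e^{Q-1}\lesssim A_\sigma|B_0|,
\end{align*}
using that $\int_{B(0,\rho)}\|z\|_\G^{1-Q}\,dz\lesssim\rho$. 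Consequently $|B_0|^{-1}\int_{B_0}|T^{\e/2}_\sigma(q)-T_\sigma(q)|\,dq\lesssim A_\sigma$, so $|B_0|^{-1}\int_{B_0}|T^{\e/2}_\sigma(q)|\,dq\le\|T_\sigma\|_\infty+CA_\sigma$, and combining with the displayed bound for $|T^\e_\sigma(p)|$ completes the estimate; taking the supremum over $\e>0$ proves the lemma.

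The main obstacle is that $k$ decays only like $\|\cdot\|_\G^{1-Q}$ while $|\sigma|$ has mass of order $r^{Q-1}$ on balls of radius $r$, so none of the integrals in play converges absolutely in a scale-invariant way: the remainder over the small ball $B(q,\tfrac\e2)$ cannot be controlled pointwise and is dealt with only after averaging in $q$, and the tail over $\hn\stm B(p,\e)$ must be handled through the smoothness estimate for $k$ rather than the crude size bound \eqref{sizeest}. Establishing that smoothness estimate — guaranteeing the connecting sub-unit curve stays in the region where $\|\gamma(t)\|_\G\approx\|a\|_\G$ — is the one point requiring the same care as in the proof of Lemma~\ref{stanfund}.
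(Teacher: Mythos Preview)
Your argument is correct and follows essentially the same Cotlar-type comparison as the paper's proof: both control the tail over $\hn\setminus B(p,\e)$ via a Lipschitz/H\"older estimate on $k$ and the near piece via Fubini/averaging together with the growth bound on $|\sigma|$. The only cosmetic differences are that the paper picks a single good point $z\in B(p,\e/4)$ (where both $|T_\sigma(z)|\le L$ and the local integral is small) rather than averaging throughout, and it cites \cite[Proposition~1.7]{fs:hardy} for the smoothness of $k$ instead of rederiving it via the sub-unit curve argument of Lemma~\ref{stanfund}.
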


\begin{proof}
We can assume that $L=\|T_\s\|_\infty< \infty$. The constants that
will appear in the following depend only on $n$ and $\sigma$. For
$\e>0$ and $p \in \hn$,
\begin{equation*}
\begin{split}
\frac{1}{|(B(p,\e / 4))|}&\int_{B(p, \e / 4)} \int_{B(p,
  \e)}\frac{1}{\|q^{-1} \cdot z\|^{Q-1}} \, d |\sigma| (q) \, dz \\
&\approx  \e^{-Q} \int_{B(p, \e / 4)} \int_{B(p, \e)}\frac{1}{\|q^{-1}
  \cdot z\|^{Q-1}} \, d |\sigma| (q) \, dz \\ 
&\leq \e^{-Q} \int_{B(p, \e)} \int_{B(q,2 \e)} \frac{dz}{\|q^{-1}
  \cdot z\|^{Q-1}} \, d|\sigma| (q) \\ 
&\approx  \e^{1-Q} |\s| (B(p,\e))\leq A_\sigma
\end{split}
\end{equation*}
where we used Fubini and the fact that 
$$
\int_{B(q,2 \e)} \frac{dz}{\|q^{-1} \cdot z\|^{Q-1}} \lesssim
\int_{B(q,2 \e)} \frac{dz}{d(q,z)^{Q-1}}\approx  \e,
$$
which is easily checked by summing over the annuli $B(q,2^{1-i} \e)
\stm B(q,2^{-i} \e), i=0,1,\dots$. 

Now because of the inequality established above we can
choose $z \in B(p, \e / 4)$ with $|T_\sigma(z)| \leq L$ such that 
\begin{equation*}
\int_{B(p, \e)}  |k(q^{-1}\cdot z)| \, d|\s|(q) \lesssim \int_{B(p,
  \e)}\frac{1}{\|q^{-1} \cdot z\|^{Q-1}} \, d|\sigma|(q) \leq L_1.
\end{equation*}
Therefore,
\begin{equation*}
\begin{split}
|T_\s^\e(p)-T_\s(z)|&=\left| \int_{\hn \stm B(p, \e)} k(q^{-1}\cdot
  p)d |\s|(q) -\int  k(q^{-1}\cdot z)d |\s|(q)\right| \\ 
&\leq \int_{\hn \stm B(p, \e)} |k(q^{-1}\cdot p)-k(q^{-1}\cdot z)|d
|\s|(q)+ \int_{B(p, \e)}  |k(q^{-1}\cdot z)|d |\s|(q) \\ 
&\leq \int_{\hn \stm B(p, \e)} |k(q^{-1}\cdot p)-k(q^{-1}\cdot z)|d
|\s|(q) +L_1. 
\end{split}
\end{equation*}
Since $k$ is a $C^\infty$, $(1-Q)$-homogeneous function on $\GG
\setminus \{0\}$ by \cite[Proposition 1.7]{fs:hardy} 
\begin{equation}\label{folprop1.7}
|k(X\cdot Y)-k(X)|\leq C \|Y\|_{cc}\|X\|_{cc}^{-Q} \quad \mbox{for all $\|Y\|_{cc}\leq \|X\|_{cc}/2$.}
\end{equation}
Therefore if $z\in B(p,\e / 4)$ and $q \in B(p,\e)^c,$ letting
$X=q^{-1} \cdot z,Y=z^{-1}\cdot p$ we have
that $$\|X\|_{cc}=d(q,z)\geq d(q,p)-d(p,z)>3 \e /4 \geq 3d(z,p)=3
\|Y\|_{cc}$$ 
and
\begin{equation*}
\int_{\hn \stm B(p, \e)} |k(q^{-1}\cdot p)-k(q^{-1}\cdot z)| \, d|\s|(q)
\lesssim \int_{\hn \stm B(p, \e)} \frac{d(p,z)}{d(z,q)^Q } \, d|\s|(q). 
\end{equation*}
Since
\begin{equation*}
\begin{split}
\int_{\hn \stm B(p, \e)} \frac{d(p,z)}{d(z,q)^Q } \, d|\s|(q) & \le
\frac{\e}{2} \sum_{j=0}^\infty \int_{B(z,2^{j} \e) \stm B(z,2^{j-1}
  \e) } \frac{1}{d(p,q)^Q } \, d|\s|(q) \\ 
&\leq  \frac{\e}{2} \sum_{j=0}^\infty  \frac{|\s|(B(p,2^{j} \e))}{(2^{j-1} \e)^Q}  \\
&\leq  A_\s\frac{\e}{2} \sum_{j=0}^\infty  \frac{(2^{j} \e)^{Q-1}}{(2^{j-1} \e)^Q} \\
&=L_2,
\end{split}
\end{equation*}
we deduce that 
\begin{equation*}
\int_{\hn \stm B(p, \e)} |k(q^{-1}\cdot p)-k(q^{-1}\cdot z)| \, d|\s|(q)
\le L_2.
\end{equation*}
Therefore
$$|T_\s^\e(p)|\leq|T_\s^\e(p)-T_\s(z)|+|T_\s(z)|\leq L_1 +L_2+L.$$
The lemma is proven.
\end{proof}

\begin{thm}\label{unb}
Let $K$ be the separated self similar set obtained in Theorem~\ref{th-example} and let 
$k_i$ be any of the coordinate kernels of $k$. 
If there exists $x=S_w (x) \in K$, $w \in V_\ast$,
such that
$$
\int_{K \setminus S_w(K)} k_i(x^{-1} \cdot y) \, d\cH^{Q-1}(y)\ne 0,
$$
then the maximal operator $T^*_{\mathcal{H}^{Q-1} \lfloor K}$ is
unbounded in $L^2(\mathcal{H}^{Q-1} \lfloor K)$.
\end{thm}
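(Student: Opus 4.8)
The plan is to argue by contradiction, along the lines of \cite{chousionis-mattila}: assume $T^{*}_{\mu}$ is bounded on $L^{2}(\mu)$, $\mu:=\cH^{Q-1}\lfloor K$, and deduce that the integral in the hypothesis vanishes. Fix the coordinate $i$ and, for a finite Borel measure $\nu$ on $K$, put $T^{\e}_{\nu,i}(p):=\int_{K\setminus B(p,\e)}k_{i}(p^{-1}\cdot y)\,d\nu(y)$ and $T^{*}_{\nu,i}(p):=\sup_{\e>0}|T^{\e}_{\nu,i}(p)|$, so that $T^{*}_{\nu}\ge T^{*}_{\nu,i}$ pointwise (this uses the kernel in the form $k_{i}(p^{-1}\cdot y)$, matching the hypothesis; for the convention of Notation~\ref{singnot} one runs the identical argument with $k$ replaced by $z\mapsto k(z^{-1})$, which is again smooth, $(1-Q)$-homogeneous and has the same size bound). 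Since $\mu$ is finite, $\mathbf 1\in L^{2}(\mu)$ and the assumption gives $T^{*}_{\mu}\in L^{2}(\mu)$; hence $T^{*}_{\mu,i}(z)<\infty$ for $\mu$-a.e.\ $z$.

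The first step is to reformulate the problem symbolically. Because the pieces $S_{0}(K),\dots,S_{M}(K)$ are pairwise disjoint (Theorem~\ref{th-example}(iii)), the coding map $\pi\colon V^{\N}\to K$, $\pi(\zeta)=\bigcap_{n}S_{\zeta_{1}\cdots\zeta_{n}}(K)$, is a homeomorphism. Write $r_{j}$ for the contraction ratio of $S_{j}$ and $r_{v}$ for the product of these along a word $v\in V_{*}$. From $K=\bigsqcup_{j}S_{j}(K)$ and the scaling of Hausdorff measure, $\mu=\sum_{j}r_{j}^{Q-1}(S_{j})_{*}\mu$; comparing total masses (and using $0<\cH^{Q-1}(K)<\infty$) gives $\sum_{j}r_{j}^{Q-1}=1$, so $\pi$ pushes the Bernoulli measure $\bar\mu:=p^{\otimes\N}$, $p(\{j\})=r_{j}^{Q-1}$, forward to $\mu/\mu(K)$; the one-sided shift $\theta$ on $(V^{\N},\bar\mu)$ is ergodic and $\bar\mu$ has full support. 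The crux is a pointwise functional equation. Writing $S_{j}(p)=p_{j}\cdot\delta_{r_{j}}(p_{j}^{-1}\cdot p)$ with $p_{j}$ the fixed point of $S_{j}$, one computes $S_{j}(a)^{-1}\cdot S_{j}(b)=\delta_{r_{j}}(a^{-1}\cdot b)$; then the substitution $y=S_{j}(q)$ on $S_{j}(K)$, the $(1-Q)$-homogeneity of $k_{i}$, and the separation of the pieces yield, for $0<\e\le\delta_{j}:=\dist(S_{j}(K),K\setminus S_{j}(K))$,
\[
T^{\e}_{\mu,i}\big(S_{j}(z_{0})\big)=T^{\e/r_{j}}_{\mu,i}(z_{0})+h_{j}(z_{0}),\qquad h_{j}(z_{0}):=\int_{K\setminus S_{j}(K)}k_{i}\big(S_{j}(z_{0})^{-1}\cdot y\big)\,d\mu(y).
\]
Iterating this along $\zeta_{1}\cdots\zeta_{n}$ with $\e=\e_{0}r_{\zeta_{1}\cdots\zeta_{n}}$, where $\e_{0}:=\min_{j}\delta_{j}$ is fixed so that every intermediate truncation radius stays below its separation threshold, the correction terms telescope and
\[
T^{\e_{0}r_{\zeta_{1}\cdots\zeta_{n}}}_{\mu,i}\big(\pi(\zeta)\big)=T^{\e_{0}}_{\mu,i}\big(\pi(\theta^{n}\zeta)\big)+\sum_{l=0}^{n-1}\psi(\theta^{l}\zeta),\qquad \psi(\eta):=\int_{K\setminus S_{\eta_{1}}(K)}k_{i}\big(\pi(\eta)^{-1}\cdot y\big)\,d\mu(y),
\]
where $\psi$ is continuous and bounded on $V^{\N}$ (by separation the integrand stays away from the pole) and $B_{0}:=\sup_{u\in K}|T^{\e_{0}}_{\mu,i}(u)|<\infty$. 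Writing $\Sigma_{n}\psi:=\sum_{l=0}^{n-1}\psi\circ\theta^{l}$, this gives $T^{*}_{\mu,i}(\pi(\zeta))\ge\sup_{n}|\Sigma_{n}\psi(\zeta)|-B_{0}$ for every $\zeta$.

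Now invoke the hypothesis. If $x=S_{w}(x)$ with $w=w_{1}\cdots w_{p}\in V_{*}$, then $\pi^{-1}(x)=\overline{w}$ is $\theta$-periodic of period $p$; decomposing $K\setminus S_{w}(K)$ into the $p$ self-similar annuli $S_{w_{1}\cdots w_{l}}(K)\setminus S_{w_{1}\cdots w_{l+1}}(K)$ and changing variables as above shows $\sum_{l=0}^{p-1}\psi(\theta^{l}\overline{w})=\int_{K\setminus S_{w}(K)}k_{i}(x^{-1}\cdot y)\,d\mu(y)=:c_{0}$, nonzero by assumption, so $|\Sigma_{kp}\psi(\overline{w})|=k|c_{0}|\to\infty$. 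On the other hand, the $\mu$-a.e.\ finiteness of $T^{*}_{\mu,i}$, the lower bound above, and $\pi_{*}\bar\mu=\mu/\mu(K)$ force $\sup_{n}|\Sigma_{n}\psi|<\infty$ $\bar\mu$-a.e. I would then use the elementary fact that a bounded measurable $\psi$ with $\bar\mu$-a.e.\ bounded Birkhoff sums is a coboundary: $g:=\liminf_{n}\Sigma_{n}\psi$ is $\bar\mu$-a.e.\ finite and $\psi=g-g\circ\theta$, since $g\circ\theta=\liminf_{n}(\Sigma_{n}\psi\circ\theta)=\liminf_{n}(\Sigma_{n+1}\psi-\psi)=g-\psi$. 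Ergodicity forces $g\in L^{\infty}(\bar\mu)$: otherwise $\{|g|>M\}$ has positive measure for all $M$, so $\bar\mu$-a.e.\ orbit meets each $\{|g|>M\}$, making $\sup_{n}|\Sigma_{n}\psi|=\sup_{n}|g-g\circ\theta^{n}|=\infty$ a.e. Then $|\Sigma_{n}\psi|\le 2\|g\|_{L^{\infty}}$ $\bar\mu$-a.e.\ for each $n$; since $\Sigma_{n}\psi$ is continuous and $\bar\mu$ has full support, the closed full-measure set $\{|\Sigma_{n}\psi|\le2\|g\|_{L^{\infty}}\}$ is all of $V^{\N}$, whence $|\Sigma_{kp}\psi(\overline{w})|\le2\|g\|_{L^{\infty}}$ for all $k$ — contradicting $|\Sigma_{kp}\psi(\overline{w})|=k|c_{0}|\to\infty$. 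Thus $T^{*}_{\mu}$ is unbounded on $L^{2}(\mu)$ (the same argument in fact yields $T^{*}_{\mu,i}=\infty$ $\mu$-a.e.).

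The main obstacle is the passage from misbehaviour at the single periodic point $x$ — a $\mu$-null set — to misbehaviour on a set of positive $\mu$-measure, which is what a failure of the $L^{2}$ estimate really demands. The functional equation is the bridge: on the symbolic side it turns $T^{*}_{\mu,i}$ into a supremum of Birkhoff sums of $\psi$, so an $L^{2}$ bound would make these a.e.\ bounded, hence $\psi$ a coboundary with a \emph{bounded} potential $g$; continuity of the partial sums together with the full support of $\bar\mu$ then upgrades the a.e.\ bound to an everywhere bound, which the periodic orbit of $x$ cannot meet. The remaining points — keeping each truncation radius in the iteration below the $\delta_{j}$, the scaling/self-similarity identities for $\mu$, and the continuity and boundedness of $\psi$ — are routine consequences of the strong separation of the pieces.
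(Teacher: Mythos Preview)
The paper does not actually prove this theorem: immediately after the statement it says ``The previous theorem was proved in \cite{chousionis-mattila} in the abstract setting of complete metric groups with dilations.\ Here we have formulated a version tailored to our setting.'' So there is no in-paper argument to compare against.

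Your proof is correct. The scaling identity $S_j(a)^{-1}\cdot S_j(b)=\delta_{r_j}(a^{-1}\cdot b)$, the $(1-Q)$-homogeneity of $k_i$, and the exact self-similarity $\mu\lfloor S_j(K)=r_j^{Q-1}(S_j)_*\mu$ combine to give the functional equation; the choice $\e_0=\min_j\delta_j$ indeed keeps every intermediate truncation radius below its separation threshold. The decomposition of $K\setminus S_w(K)$ into annuli $S_{w_1\cdots w_l}(K)\setminus S_{w_1\cdots w_{l+1}}(K)$ correctly identifies $\Sigma_p\psi(\overline w)$ with the hypothesis integral $c_0$, and the coboundary step is sound: from $\Sigma_{n+1}\psi=\psi+\Sigma_n\psi\circ\theta$ one gets $\psi=g-g\circ\theta$ with $g=\liminf_n\Sigma_n\psi$, and the telescoping $\Sigma_n\psi=g-g\circ\theta^n$ together with ergodicity forces $g\in L^\infty(\bar\mu)$; then continuity of each $\Sigma_n\psi$ (note the cylinders $\{\eta_1=j\}$ are clopen, so $\psi$ is globally continuous) and full support of $\bar\mu$ upgrade the a.e.\ bound to a pointwise one, contradicted at $\overline w$. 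Your remark on the kernel convention $k_i(p^{-1}\cdot y)$ versus $k_i(q^{-1}\cdot p)$ is accurate and in any case irrelevant to the paper's application, where the fixed point is $0$.

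Since you explicitly model your argument on \cite{chousionis-mattila}, your route is presumably close to the one the paper defers to; the ergodic reformulation via Birkhoff sums of $\psi$ and the coboundary/full-support trick is precisely the mechanism by which the nonvanishing at the single periodic point propagates to $\mu$-a.e.\ blow-up of $T^*_{\mu,i}$.
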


The previous theorem was proved in \cite{chousionis-mattila} in the
abstract setting of complete metric groups with dilations. Here we have
formulated a version tailored to our setting.
We will also need the following Lemma which compares usual maximal
singular integrals to maximal symbolic singular integrals on separated
self-similar sets. The proof can be found in \cite[Lemma
2.4]{chousionis-mattila}.

\begin{lm}\label{compop}
Let $K$ be the separated self similar set obtained in Theorem
\ref{th-example} and let $k_i$ be any of the coordinate kernels of
$k$. Then
\begin{enumerate}
\item there exists a constant $\alpha_K>0$, depending only on the set
  $K$, such that
\begin{equation*}
\dist (S_v(K),K \stm S_v(K)) \geq \alpha_K  \diam (S_v (K))
\end{equation*}
for every $v \in V_\ast$, and
\item there is a constant $A_{\text{C}}$, depending only on the set $K$
  and the kernel $k_i$, such that
\begin{equation*}
\begin{split}
&\left|\int_{S_w(K) \stm S_v(K)} k_i(p^{-1} \cdot y) \, d\cH^{Q-1}(y)
\right| \\
& \quad \quad \le \left| \int_{B(p,2\diam (S_w(K)))\stm B(p,2\diam
    (S_v(K))) } k_i(p^{-1} \cdot y) \, d\cH^{Q-1}(y) \right| + A_{K}
\end{split}
\end{equation*}
for all $w,v \in V$ and $p \in \hn$ for which $S_v (K) \subset S_w(K)$
and $$\dist (p,S_v(K)) \leq \frac{\alpha_K}{2}\diam (S_v(K)).$$
\end{enumerate}
\end{lm}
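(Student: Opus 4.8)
The plan is to prove (i) by a standard strong‑separation argument and then to deduce (ii) from (i) together with the two hypotheses, by reducing the difference of the two integrals to a sum of integrals over two thin spherical shells centred at $p$ --- one of radius comparable to $\diam S_v(K)$ and one of radius comparable to $\diam S_w(K)$ --- each of which is controlled using the $(Q-1)$-AD regularity of $\cH^{Q-1}\lfloor K$ supplied by Remark~\ref{ssad}. (Here and below all singular integrals are understood to be taken with respect to $\cH^{Q-1}\lfloor K$.) For (i), I would first note that by Theorem~\ref{th-example} the pieces $S_0(K),\dots,S_M(K)$ are pairwise disjoint compact sets, so $\delta_0:=\min_{0\le i\ne j\le M}\dist(S_i(K),S_j(K))>0$; since $S_i(K),S_j(K)\subset K$ we have $\delta_0\le\diam K$, hence $\alpha_K:=\delta_0/\diam K\in(0,1]$. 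One then proves $\dist(S_v(K),K\stm S_v(K))\ge\alpha_K\diam S_v(K)$ by induction on the length of $v$. For $v$ a single letter this is immediate, since $K\stm S_v(K)=\bigcup_{j\ne v}S_j(K)$ and $\diam S_v(K)\le\diam K$. For $v=v'i$ of length $\ge2$, using that $S_{v'}$ is a similarity of ratio $r_{v'}\le1$ with $S_v(K)\subset S_{v'}(K)$ one has the disjoint splitting $K\stm S_v(K)=(K\stm S_{v'}(K))\sqcup S_{v'}(K\stm S_i(K))$; the distance of $S_v(K)$ to the second set equals $r_{v'}\dist(S_i(K),K\stm S_i(K))\ge r_{v'}\delta_0\ge\alpha_K\diam S_v(K)$, since $\diam S_v(K)=r_{v'}r_i\diam K\le r_{v'}\diam K$, while its distance to the first set is at least $\dist(S_{v'}(K),K\stm S_{v'}(K))\ge\alpha_K\diam S_{v'}(K)\ge\alpha_K\diam S_v(K)$ by the inductive hypothesis. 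This establishes (i) with this $\alpha_K$.

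For (ii), write $R_v=\diam S_v(K)\le R_w=\diam S_w(K)$ and set $D:=S_w(K)\stm S_v(K)$, $A:=B(p,2R_w)\stm B(p,2R_v)$. From the hypothesis $\dist(p,S_v(K))\le\tfrac{\alpha_K}{2}R_v$ and $\alpha_K\le1$ one gets $S_v(K)\subset U(p,2R_v)$, and since $S_v(K)\subset S_w(K)$ also $\dist(p,S_w(K))\le\tfrac{\alpha_K}{2}R_w$, hence $S_w(K)\subset B(p,2R_w)$. A short set-theoretic check then yields $D\stm A\subset(K\stm S_v(K))\cap B(p,2R_v)=:E_v$ and $(A\cap K)\stm D\subset(K\stm S_w(K))\cap B(p,2R_w)=:E_w$; the only point is that a point of $A\cap K$ lying outside $D$ cannot belong to $S_v(K)$, because $S_v(K)$ is disjoint from the annulus $A$. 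Using $|k_i|\le|k|$ this gives
\begin{equation*}
\begin{split}
\Big|\int_{D}k_i(p^{-1}\cdot y)\,d\cH^{Q-1}(y)&-\int_{A}k_i(p^{-1}\cdot y)\,d\cH^{Q-1}(y)\Big|\\
&\le\int_{E_v}|k(p^{-1}\cdot y)|\,d\cH^{Q-1}(y)+\int_{E_w}|k(p^{-1}\cdot y)|\,d\cH^{Q-1}(y).
\end{split}
\end{equation*}

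Now part (i) gives, for every $y\in K\stm S_v(K)$, the lower bound $d(p,y)\ge\dist(y,S_v(K))-\dist(p,S_v(K))\ge\alpha_K R_v-\tfrac{\alpha_K}{2}R_v=\tfrac{\alpha_K}{2}R_v$, so $E_v$ lies in the spherical shell $\{y:\tfrac{\alpha_K}{2}R_v\le d(p,y)\le2R_v\}$, and in the same way $E_w\subset\{y:\tfrac{\alpha_K}{2}R_w\le d(p,y)\le2R_w\}$. By \eqref{sizeest}, the equivalence of homogeneous norms, and the left-invariance of $d$ one has $|k(p^{-1}\cdot y)|\lesssim d(p,y)^{1-Q}$, while Remark~\ref{ssad} gives $\cH^{Q-1}(K\cap B(p,t))\lesssim t^{Q-1}$ for every $p\in\GG$ and $t>0$. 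Decomposing a shell $\{\tfrac{\alpha_K}{2}R\le d(p,\cdot)\le2R\}$ dyadically into the $O(\log(1/\alpha_K))$ annuli $B(p,2^{-j+1}R)\stm B(p,2^{-j}R)$ and estimating the $j$-th piece by $(2^{-j}R)^{1-Q}\,\cH^{Q-1}(K\cap B(p,2^{-j+1}R))\lesssim1$ --- with the scale $R$ cancelling out --- one bounds each of the two integrals on the right-hand side above by a constant $A_K$ depending only on $K$ (and, through \eqref{sizeest}, on the kernel) and independent of $v,w,p$. This is exactly the estimate claimed in (ii).

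The induction in (i) and the dyadic shell bound are routine; the step that genuinely uses the hypotheses is the geometric reduction in the second paragraph. The hard part is to see that $\dist(p,S_v(K))\le\tfrac{\alpha_K}{2}\diam S_v(K)$, combined with the separation in (i), forces the symmetric difference between $D$ and the annulus $A$ into two shells around $p$ whose inner-to-outer radius ratio ($\ge\alpha_K/4$) does not depend on the words $v,w$; it is exactly this word- and scale-independence, together with $(Q-1)$-AD regularity, that delivers a bound uniform in $v,w,p$. Without it the shells could be arbitrarily thick relative to their radii and the estimate would fail.
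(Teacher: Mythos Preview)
Your argument is correct: the induction for (i) is the standard strong-separation estimate, and for (ii) your set-theoretic reduction of the symmetric difference to the two shells $\{\tfrac{\alpha_K}{2}R_v\le d(p,\cdot)\le 2R_v\}$ and $\{\tfrac{\alpha_K}{2}R_w\le d(p,\cdot)\le 2R_w\}$, followed by the dyadic bound using $(Q-1)$-AD regularity, is exactly the right mechanism and yields the claimed uniform constant. Note that the paper does not give its own proof of this lemma but refers to \cite[Lemma~2.4]{chousionis-mattila}; your proof follows the same standard route one finds there.
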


We can now prove Theorem \ref{unbrem}.

\begin{proof}[Proof of Theorem \ref{unbrem}] 
There exists some $i=1, \dots, m$ such that $k_i$ is not identically
zero in $\GG \stm \{0\}$. Therefore, since $\O_i$ is continuous in
$\Sph$, there exists some open set $U \subset \Sph$ such that, without
loss of generality, $\O_i (p)>0$ for all $p \in U$. In particular
$k_i$ is positive for all $p \in \widehat{U} \setminus \{0\}$.

Now let $K$ be the separated self similar set that we obtain from
Theorem \ref{th-example} for $\widehat{U}$ as above. Notice that since
$K \subset \widehat{U}$, and $k_i$ is positive on $\widehat{U}\stm
\{0\}$
\begin{equation}\label{lastestim}
\begin{split}
\int_{K \stm S_0(K)} k_i(y) \, d\cH^{Q-1}(y) > 0.
\end{split}
\end{equation}
Since $0$ is a fixed point of $K$, Theorem \ref{unb} implies that
$T^\ast_{\ha^{Q-1}\lfloor K}$ is unbounded in $L^2(\ha^{Q-1}\lfloor
K)$.

Suppose that $K$ is not removable. Then there exists a domain
$D\supset K$ and a Lipschitz function $f:D \ra R$ which is
$\dh$-harmonic in $D\stm K$ but not in $D$. By Theorem \ref{main}
there exists a domain $G,K \subset G \subset D$, a Borel function $h:C
\ra \R$ and an $\dh$-harmonic function $H: G \ra \R$ such that 
$$
f(p)=\int_{K} \G(q^{-1}\cdot p)h(q) \, d \ha^{Q-1}(q)+H(p) \ \text{for} \
p \in G \stm K
$$
and $\|h\|_{L^\infty (\ha^{Q-1} \lfloor K)}+\|\anah H \|_\infty
\lesssim 1$. Let $\s=h \ha^{Q-1} \lfloor K$. In this case by the left
invariance of $\anah$ as in (\ref{im1}) and recalling Notation
\ref{singnot} 
$$
T_\s (p)=\anah f(p)- \anah H(p) \ \text{for all} \ p \in G\stm K
$$ 
which implies that
\begin{equation}
\label{tsb1}
|T_\s (p)| \lesssim 1 \ \text{ for all } \ p \in G\stm K. 
\end{equation}
Let $\delta=\dist (K, \hn \stm G )>0$. Then for $p \in \hn \stm G$,
\begin{equation}
\label{tsb2}
|T_\s (p)|\lesssim \int \frac{1}{\|q^{-1} \cdot p\|^{Q-1}} \, d|\s|(q)
\le \frac{|\s|(K)}{\delta^{Q-1}} \lesssim 1.
\end{equation} 
By (\ref{tsb1}) and (\ref{tsb2}) we deduce that $T_\s \in L^\infty$.
Hence, recalling  Remark \ref{ssad},  the measure $\ha^{Q-1} \lfloor
K$ is $(Q-1)$-AD regular and we can apply Lemma \ref{semmes} to
conclude that $T^\ast_\s$ is bounded. 

Furthermore since $f$ is not harmonic in $K$, $h \neq 0$ in a set of
positive $\ha^{Q-1}$ measure. Therefore there exists a point $p \in K$
of approximate continuity (with respect to $\ha^{Q-1} \lfloor K$) of
$h$ such that $h(p) \neq 0$.  Let $w_n \in \{0,1, \dots, M\}^n$, where
$M$ is as in Theorem \ref{th-example}, be such that $p \in S_{w_n}
(K)$. Then by the approximate continuity of $h$,
$$
r^{(1-Q)n}(S_{w_n}^{-1})_\sharp (\s \lfloor S_{w_n}(K))
\rightharpoonup h(p)\ha^{Q-1} \lfloor K \text{ as }  n \ra \infty.
$$ 
We can now check that the boundedness of $T^\ast_\s$ implies that
$T^\ast_{\ha^{Q-1} \lfloor K}$ is bounded. Let $z \in \hn \stm (K\cup
\bigcup_{n=1}^\infty S^{-1}_{w_n}(K))$. If $\dist(z,K)>
\frac{\alpha_{K}}{2}\diam(K)$, then
\begin{equation}
|T_{\ha^{Q-1} \lfloor K}(z)|\lesssim 1. 
\end{equation} 
Therefore we can assume that $\dist(z,K)\leq
\frac{\alpha_{K}}{2}\diam(K)$. Hence for any $w \in V_\ast$,
\begin{equation}
\label{dissw}
\begin{split}
\dist (S_w (z), S_w(K))&=r^{|w|}\dist(z,K) \\
&\leq r^{|w|}\frac{\alpha_{K}}{2} \diam (K) \\
&=\frac{\alpha_{K}}{2} \diam (S_w(K)).
\end{split}
\end{equation}
Notice that the $0$-homogeneity of $k_i$  implies that
$k_i(S^{-1}_{w_n}(q)^{-1}\cdot z)=r^{(Q-1)n} k_i(q^{-1}\cdot
S_{w_n}(z))$. Therefore, 
\begin{equation*}
\begin{split}
h(p)T_{\ha^{Q-1} \lfloor K}(z)&=\lim_{n \ra \infty} r^{(1-Q)n}\int
k_i(q^{-1}\cdot z) \, d(S_{w_n}^{-1})_\sharp (\s \lfloor S_{w_n}(K))(q) \\
&=\lim_{n \ra \infty} r^{(1-Q)n}\int_{S_{w_n}(K)}
k_i(S^{-1}_{w_n}(q)^{-1}\cdot z) \,d\s (q) \\
&=\lim_{n \ra \infty} \int_{S_{w_n}(K)} k_i(q^{-1}\cdot S_{w_n}(z)) \,
d\s (q) \\
&=\lim_{n \ra \infty} \left(\int_{K} k_i(q^{-1}\cdot S_{w_n}(z)) \,
  d\s (q)-\int_{K \stm S_{w_n}(K)} k_i(q^{-1}\cdot S_{w_n}(z)) \, d\s (q)
\right).
\end{split}
\end{equation*}
Since $z \notin \bigcup_{n=1}^\infty S^{-1}_{w_n}(K)$,
\begin{equation*}
\left|\int_{K} k_i(q^{-1}\cdot S_{w_n}(z)) \, d\s (q) \right| \leq
\|T^\ast_\s\|_\infty.
\end{equation*}
Furthermore by Lemma \ref{compop} and (\ref{dissw}) we get that,
\begin{equation*}
\left|\int_{K \stm S_{w_n}(K)} k_i(q^{-1}\cdot S_{w_n}(z)) \, d\s (q)
\right| \leq 2 \|T^\ast_\s\|_\infty+A_{K}.
\end{equation*}
Therefore,
$$
|h(p) T_{\ha^{Q-1} \lfloor K}(z)| \leq 3 \|T^\ast_\s\|_\infty+A_{K},
$$
and since $$\left | K\cup \bigcup_{n=1}^\infty
  S^{-1}_{w_n}(K)\right|=0$$ we get that $T_{\ha^{Q-1} \lfloor
  C_{Q-1}} \in L^\infty$. Hence by Lemma \ref{semmes}
$T^\ast_{\ha^{Q-1} \lfloor K}$ is bounded in $L^2(\ha^{Q-1} \lfloor
K)$ and we have reached a contradiction. The proof of the theorem is
complete.
\end{proof}
   
\begin{rem}\label{verplane} 
Vertical hyperplanes of the form $\{(x,t) \in \hn: x\in W, t\in\R\}$,
where $W$ is a linear hyperplane in $\R^{m}$, are homogeneous
subgroups of $\hn$, that is, they are closed subgroups invariant under
the dilations $\delta_r$. Their Hausdorff dimension is $Q-1$.  
If $V$ is any such vertical hyperplane and $\sigma$ denotes the
$(Q-2)$-dimensional Lebesgue measure on $V$ it follows by
\cite[Theorem 4, page 623 and Corollary 2, page 36]{Ste}
that $T^\ast_\sigma$ is bounded in $L^2(\sigma)$. This   implies, for
example by the methods used in \cite{mpa}, that positive measure
subsets of vertical hyperplanes are not removable for Lipschitz
harmonic functions.
\end{rem}

\section{Concluding comments and questions}

As in the Euclidean case the study of removable sets for Lipschitz
$\cL$-harmonic functions with positive and finite $\ha^{Q-1}$-measure
heavily depends on the study of the singular integral $T(f)=(T_1(f),
\dots, T_m(f))$ where formally 
$$
T_i(f)(p)=\int k_i(p^{-1} \cdot q)f(q) \, d \ha^{Q-1}(q)
$$
and $k=(k_1,\dots,k_m)=\anah \G$. 

Our understanding of such singular integrals is extremely limited even
when the fundamental solution of the sub-Laplacian, and hence the
kernel $k$, have explicit formulas as in the Heisenberg group. There
are two natural directions one could pursue in order to extend our
knowledge of the topic. First of all it is not known what regularity and
smoothness assumptions are needed for a $(Q-1)$-AD regular set $M$ in
order the operator $T$ to be bounded in $L^2(\ha^{Q-1} \lfloor M)$.
Recall that sets which define $L^2$-bounded operators can be seen to
be non-removable, cf.\ Remark \ref{verplane}. Second it is not known
how much we can extend the range of removable $(Q-1)$-dimensional
self-similar sets. We are not aware of any self-similar sets where the
condition in Theorem \ref{unb} fails for all its fixed points.
Nevertheless due to the changes in sign of the kernel  checking that
the integral in Theorem \ref{unb} does not vanish could be technically
very complicated. 

\bibliographystyle{acm}
\bibliography{Carnotlip}
\end{document}